\theoremstyle{definition}
\newtheorem*{proposition*}{Proposition}
\newtheorem*{theorem*}{Theorem}
\newcommand{\BIGOP}[1]{\mathop{\mathchoice%
{\raise-0.22em\hbox{\huge $#1$}}%
{\raise-0.05em\hbox{\Large $#1$}}%
{\hbox{\large $#1$}}%
{#1}}}
\def\morchoice#1#2{\begingroup\setbox0=\hbox{$#1\xrightarrow{#2}$}%
        \setbox1=\hbox{$#1\longrightarrow$}%
        \ifdim\wd0<\wd1
        \stackrel{#2}\longrightarrow
        \else
        \xrightarrow{#2}\fi\endgroup}
\newcommand{\booktitle}[1]{\textsl{#1}}											
\newcommand{\eigenname}[1]{\textsc{#1}}											
\newcommand{\newnotion}[1]{\textit{#1}}											
\DeclareMathOperator{\DiagonalFunctor}{\mathrm{Diag}}				
\DeclareMathOperator{\Mor}{\mathrm{Mor}}										
\DeclareMathOperator{\Ob}{\mathrm{Ob}}											
\DeclareMathOperator{\TotalSimplicialObject}{\mathrm{Tot}}	
\newcommand{\ascendinginterval}[2]{\lceil #1, #2 \rceil}		
\newcommand{\AssociatedComplex}[1][]{\ifthenelse{\equal{#1}{}}{\mathrm{C}}{\mathrm{C}^{(#1)}}}	
\newcommand{\bigtimes}{\BIGOP{\times}}											
\newcommand{\Cat}{\mathbf{Cat}}															
\newcommand{\CatNaturalNumber}[1]{[#1]}											
\newcommand{\ClassifyingSimplicialSet}[1][]{\ifthenelse{\equal{#1}{}}{\mathrm{B}}{\mathrm{B}^{(#1)}}}	
\newcommand{\codegeneracy}{\upsigma}												
\newcommand{\coface}{\updelta}															
\newcommand{\comp}{\circ}																		
\newcommand{\cotime}{\uptau}																
\newcommand{\csCategory}[1][]{\mathbf{cs}^{#1}}							
\newcommand{\degeneracy}{\mathrm{s}}												
\newcommand{\descendinginterval}[2]{\lfloor #1, #2 \rfloor}	
\newcommand{\EilenbergWFunctor}{\mathrm{W}}									
\newcommand{\face}{\mathrm{d}}															
\newcommand{\Grps}{\mathbf{Grps}}														
\newcommand{\HomologyGroup}{\mathrm{H}}											
\newcommand{\homotopic}{\sim}																
\newcommand{\homotopyequivalent}{\simeq}										
\newcommand{\id}{\mathrm{id}}																
\newcommand{\ins}{\mathrm{ins}}															
\newcommand{\Integers}{\mathbb{Z}}													
\newcommand{\isomorphic}{\cong}															
\newcommand{\KanClassifyingSimplicialSet}{\overline{\EilenbergWFunctor}}	
\newcommand{\lcb}{\pmb{(}}																	
\newcommand{\map}{\rightarrow}															
\newcommand{\morphism}[1][]{\mathpalette\morchoice{#1}}			
\newcommand{\Naturals}{\mathbb{N}}													
\newcommand{\Nerve}{\mathrm{N}}															
\newcommand{\op}{\mathrm{op}}																
\newcommand{\rcb}{\pmb{)}}																	
\newcommand{\sCategory}[1][]{\mathbf{s}^{#1}}								
\newcommand{\Sets}{\mathbf{Sets}}														
\newcommand{\sGrps}{\mathbf{sGrps}}													
\newcommand{\SimplexTypes}{\mathbf{\Delta}}									
\newcommand{\Spl}{\mathrm{Spl}}															
\newcommand{\sSets}[1][]{\sCategory[#1]\Sets}								
\newcommand{\StandardSimplex}[1]{\Delta^{#1}}								
\title{The functors \(\KanClassifyingSimplicialSet\) and \(\DiagonalFunctor \comp \mathrm{Nerve}\) \\ are simplicially homotopy equivalent}
\author{Sebastian Thomas}
\date{March 17th, 2008}
\begin{document}

\maketitle

\renewcommand{\thefootnote}{\fnsymbol{footnote}}
\footnotetext[0]{Mathematics Subject Classification 2000: 18G30, 55U10.}
\renewcommand{\thefootnote}{\arabic{footnote}}

\begin{abstract}
Given a simplicial group \(G\), there are two known classifying simplicial set constructions, the Kan classifying simplicial set \(\KanClassifyingSimplicialSet G\) and \(\DiagonalFunctor \Nerve G\), where \(\Nerve\) denotes the dimensionwise nerve. They are known to be weakly homotopy equivalent. We will show that \(\KanClassifyingSimplicialSet G\) is a strong simplicial deformation retract of \(\DiagonalFunctor \Nerve G\). In particular, \(\KanClassifyingSimplicialSet G\) and \(\DiagonalFunctor \Nerve G\) are simplicially homotopy equivalent.
\end{abstract}

\section{Introduction}

We suppose given a simplicial group \(G\). \eigenname{Kan} introduced in \cite{kanhtcg} the Kan classifying simplicial set \(\KanClassifyingSimplicialSet G\). The functor \(\KanClassifyingSimplicialSet\) from simplicial groups to simplicial sets is the right adjoint, and actually the homotopy inverse, to the Kan loop group functor, which is a combinatorial analogue to the topological loop space functor. Alternatively, dimensionwise application of the nerve functor for groups yields a bisimplicial set \(\Nerve G\), to which we can apply the diagonal functor to obtain a simplicial set \(\DiagonalFunctor \Nerve G\). The latter construction is used for example by \eigenname{Quillen} \cite[appendix Q.3]{fmfothoav} and \eigenname{Jardine} \cite[p.\ 41]{jarahtgakt}.

It is well-known that these two variants \(\KanClassifyingSimplicialSet G\) and \(\DiagonalFunctor \Nerve G\) for the classifying simplicial set of \(G\) are weakly homotopy equivalent. Better still, the Kan classifying functor \(\KanClassifyingSimplicialSet\) can be obtained as the composite of the nerve functor with the total simplicial set functor \(\TotalSimplicialObject\) as introduced by \eigenname{Artin} and \eigenname{Mazur} \cite{amotvkt} (\footnote{This is not the total simplicial set as used by \eigenname{Bousfield} and \eigenname{Friedlander} \cite[appendix B, p.\ 118]{bfhtogssabs}.}); and \eigenname{Cegarra} and \eigenname{Remedios} \cite{crtrbtdatbcoabs} showed that already the total simplicial set functor and the diagonal functor, applied to a bisimplicial set, yield weakly homotopy equivalent results. Moreover, the model structures on the category of bisimplicial sets induced by \(\TotalSimplicialObject\) resp.\ by \(\DiagonalFunctor\) are related \cite{crtbotwcothtobs}.

The aim of this article is to prove the following

\begin{theorem*}
The Kan classifying simplicial set \(\KanClassifyingSimplicialSet G\) is a strong simplicial deformation retract of \(\DiagonalFunctor \Nerve G\). In particular, \(\KanClassifyingSimplicialSet G\) and \(\DiagonalFunctor \Nerve G\) are simplicially homotopy equivalent.
\end{theorem*}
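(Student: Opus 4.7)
By the identification \(\KanClassifyingSimplicialSet G \isomorphic \TotalSimplicialObject \Nerve G\) noted in the introduction, the task reduces to producing three explicit pieces of data: a simplicial map \(i \colon \KanClassifyingSimplicialSet G \map \DiagonalFunctor \Nerve G\), a simplicial retraction \(r \colon \DiagonalFunctor \Nerve G \map \KanClassifyingSimplicialSet G\) with \(r \comp i = \id\), and a simplicial homotopy \(H \colon \DiagonalFunctor \Nerve G \times \StandardSimplex{1} \map \DiagonalFunctor \Nerve G\) from \(\id\) to \(i \comp r\) that is constant along \(\StandardSimplex{1}\) on the subcomplex \(i(\KanClassifyingSimplicialSet G)\). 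The last condition is precisely what makes the deformation retract \emph{strong}; together with the homotopy equivalence assertion it establishes the theorem.

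I would begin by writing out explicit formulas. Describing an \(n\)-simplex of \(\KanClassifyingSimplicialSet G\) as a tuple \((g_{n-1}, \dotsc, g_0)\) with \(g_k \in G_k\), and an \(n\)-simplex of \(\DiagonalFunctor \Nerve G\) as a tuple \((h_1, \dotsc, h_n) \in G_n^n\), the retraction \(r\) is obtained by applying suitable iterated face operators in the simplicial direction of \(G\) to bring the \(k\)-th coordinate down to dimension \(k\); the inclusion \(i\), dually, lifts each \(g_k\) to \(G_n\) by iterated degeneracies. That both are simplicial maps and satisfy \(r \comp i = \id\) is a direct verification using the simplicial identities in \(G\) together with the explicit face and degeneracy formulas of the bisimplicial set \(\Nerve G\); this is the routine part.

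The decisive step is the construction of \(H\). A simplicial homotopy from \(\id\) to \(i \comp r\) is equivalent to a family of maps \(h_k \colon (\DiagonalFunctor \Nerve G)_n \map (\DiagonalFunctor \Nerve G)_{n+1}\) for \(0 \le k \le n\) satisfying the standard simplicial homotopy identities together with the boundary conditions \(\face_0 h_0 = \id\) and \(\face_{n+1} h_n = i \comp r\). I would define the \(h_k\) by formulas that interpolate coordinate by coordinate between the identity and \(i \comp r\), at each step replacing one additional slot by its normalised form (a degeneracy of a face). Strongness is then automatic: on \(i(\KanClassifyingSimplicialSet G)\) every coordinate is already a pure degeneracy from a lower dimension, so the replacements carried out by each \(h_k\) are absorbed by simplicial identities and \(H\) becomes constant along \(\StandardSimplex{1}\) on this subcomplex.

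The main obstacle will be the verification of the simplicial identities for the family \((h_k)\): this will unfold into a case analysis on the relative positions of the indices of \(\face\), \(\degeneracy\), and \(h\), each case reducing to a short application of simplicial identities in \(G\) and of the merging rule for the nerve face maps on \(G_n^n\). No conceptual difficulty is expected, but finding formulas for which the identities can be verified without excessive bookkeeping, and organising the verification so that it remains readable, is the real technical work of the paper.
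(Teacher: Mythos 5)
The overall skeleton is the same as the paper's — a retraction $r = D_G$, a coretraction $i = S_G$, and a homotopy $H$ from $i r$ to $\id$ that is constant along $i$ — but your concrete candidate for the coretraction is wrong, and this is precisely where the real content of the theorem lies. You propose to define $i$ by "lifting each $g_k$ to $G_n$ by iterated degeneracies," i.e.\ sending $(g_i)_{i \in \descendinginterval{n-1}{0}}$ to $(g_i \degeneracy_{\ascendinginterval{i}{n-1}})_{i \in \descendinginterval{n-1}{0}}$, and call the verification that this is a simplicial map "routine." It is not a simplicial map. The faces of $\DiagonalFunctor \Nerve G$ act diagonally: applying $\face_k$ simultaneously applies $\face_k$ inside $G$ \emph{and} the nerve face, which multiplies two adjacent coordinates together. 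A product of two distinct degenerate elements is not itself a degeneracy of a lower-dimensional element, so the image of your proposed $i$ is not stable under the face maps of $\DiagonalFunctor \Nerve G$, and the naive formula fails already for $n = 2$ (compute $\face_1$ of $(g_1 \degeneracy_1, g_0 \degeneracy_0 \degeneracy_1)$ and compare). The actual coretraction in the paper is defined by a descending recursion
\[
y_i := \prod_{j \in \ascendinginterval{i + 1}{n - 1}} \bigl(y_j^{-1} \face_{\descendinginterval{j}{i + 1}} \degeneracy_{\ascendinginterval{i}{j - 1}}\bigr) \prod_{j \in \descendinginterval{n - 1}{i}} \bigl(g_j \face_{\descendinginterval{j}{i + 1}} \degeneracy_{\ascendinginterval{i}{n - 1}}\bigr),
\]
whose correction factors $y_j^{-1} \face \degeneracy$ (involving the previously computed $y_j$) are exactly what repair the failure of your formula, and verifying that \emph{this} is a simplicial map and a section of $D_G$ takes several pages of index bookkeeping. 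It is the central technical lemma, not a routine check.

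The same gap propagates to your treatment of strongness. You argue it is "automatic" because on $i(\KanClassifyingSimplicialSet G)$ "every coordinate is already a pure degeneracy from a lower dimension," but as just observed the coordinates of $S_G(g)$ are \emph{not} pure degeneracies — they are products of degeneracies with corrections. The paper's homotopy $H$ also carries a nontrivial recursive formula, indexed by $k \in [0, n+1]$, interpolating between $\id$ and $D_G S_G$, and proving that it is constant along $S_G$ requires a further explicit calculation showing $y_i \face_{\descendinginterval{k-1}{i+1}} \degeneracy_{\ascendinginterval{i}{k-2}}$ reproduces the recursion; nothing about it is automatic. Your reformulation of the homotopy as a family of operators $h_k$ satisfying the usual identities is a legitimate alternative packaging, and the reduction via $\KanClassifyingSimplicialSet \isomorphic \TotalSimplicialObject \comp \Nerve$ correctly identifies $D_G$ with the component of the natural transformation $\DiagonalFunctor \Rightarrow \TotalSimplicialObject$, but neither observation supplies the missing formulas. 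As it stands, the proposal would stall at the point where you need to write down a coretraction that actually commutes with the simplicial operators.
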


This commutativity up to simplicial homotopy equivalence fits into the following diagram.

\[\xymatrix@C=10mm{
& \big( \substack{\text{simplicial} \\ \text{sets}} \big) \ar[rr]^(0.42){\substack{\text{associated} \\ \text{complex}}} & & \big( \substack{\text{complexes} \\ \text{of abelian groups}} \big) \ar[rr]^(0.58){\text{homology}} & & \big( \substack{\text{abelian} \\ \text{groups}} \big) \\
\big( \substack{\text{simplicial} \\ \text{groups}} \big) \ar[ru]^*+<0ex,-1ex>{\substack{\text{Kan} \\ \text{classifying} \\ \text{functor} \\[0.5ex] \KanClassifyingSimplicialSet}} \ar[rd]_*+<2ex,0.5ex>{\substack{\text{nerve} \\[0.5ex] \Nerve}} & & & & & \\
& \big( \substack{\text{bisimplicial} \\ \text{sets}} \big) \ar@<-1ex>[uu]_{\substack{\text{diagonal} \\ \text{simplicial} \\ \text{set} \\[0.5ex] \DiagonalFunctor}} \ar@<1ex>[uu]^{\substack{\text{total} \\ \text{simplicial} \\ \text{set} \\[0.5ex] \TotalSimplicialObject}} \ar[rr]^{\substack{\text{associated} \\ \text{double} \\ \text{complex}}} & & \big( \substack{\text{double} \\ \text{complexes}} \big) \ar[uu]_{\substack{\text{total} \\ \text{complex}}} \ar[rr]^{\substack{\text{associated} \\ \text{spectral} \\ \text{sequence}}} & & \big( \substack{\text{spectral} \\ \text{sequences}} \big) \ar@.[uu]_{\text{approximation}}
}\]

By definition, the homology of a simplicial group is obtained by composition of the functors in the upper row. The generalised Eilenberg-Zilber theorem (due to \eigenname{Dold}, \eigenname{Puppe} and \eigenname{Cartier} \cite[Satz 2.9]{dphnaf}) states that the quadrangle in the middle of the diagram commutes up to homotopy equivalence of complexes. The composition of the functors in the lower row yields the Jardine spectral sequence \cite[Lemma 4.1.3]{jarahtgakt} of \(G\), which has \(E_{p, n - p}^1 \isomorphic \HomologyGroup_{n - p}(G_p)\), and which converges to the homology of \(G\). Similarly for cohomology.

\subsection*{Conventions and notations}

We use the following conventions and notations.

\begin{itemize}
\item The composite of morphisms \(X \morphism[f] Y\) and \(Y \morphism[g] Z\) is denoted by \(X \morphism[f g] Z\). The composite of functors \(\mathcal{C} \morphism[F] \mathcal{D}\) and \(\mathcal{D} \morphism[G] \mathcal{E}\) is denoted by \(\mathcal{C} \morphism[G \comp F] \mathcal{E}\).
\item If \(\mathcal{C}\) is a category and \(X, Y \in \Ob \mathcal{C}\) are objects in \(\mathcal{C}\), we write \({_\mathcal{C}}(X, Y) = \Mor_\mathcal{C}(X, Y)\) for the set of morphisms between \(X\) and \(Y\). Moreover, we denote by \(\lcb \mathcal{C}, \mathcal{D} \rcb\) the functor category that has functors between \(\mathcal{C}\) and \(\mathcal{D}\) as objects and natural transformations between these functors as morphisms.
\item Given a functor \(I \morphism[X] \mathcal{C}\), we sometimes denote the image of a morphism \(i \morphism[\theta] j\) in \(I\) by \(X_i \morphism[X_\theta] X_j\). This applies in particular if \(I = \SimplexTypes^\op\) or \(I = \SimplexTypes^\op \times \SimplexTypes^\op\).
\item We use the notations \(\Naturals = \{1, 2, 3, \dots\}\) and \(\Naturals_0 = \Naturals \cup \{0\}\).
\item Given integers \(a, b \in \Integers\), we write \([a, b] := \{z \in \Integers \mid a \leq z \leq b\}\) for the set of integers lying between \(a\) and \(b\). Moreover, we write \(\ascendinginterval{a}{b} := (z \in \Integers \mid a \leq z \leq b)\) for the \newnotion{ascending interval} and \(\descendinginterval{a}{b} = (z \in \Integers \mid a \geq z \geq b)\) for the \newnotion{descending interval}. Whereas we formally deal with tuples, we use the element notation, for example we write \(\prod_{i \in \ascendinginterval{1}{3}} g_i = g_1 g_2 g_3\) and \(\prod_{i \in \descendinginterval{3}{1}} g_i = g_3 g_2 g_1\) or \((g_i)_{i \in \descendinginterval{3}{1}} = (g_3, g_2, g_1)\) for group elements \(g_1, g_2, g_3\).
\item Given an index set \(I\), families of groups \((G_i)_{i \in I}\) and \((H_i)_{i \in I}\) and a family of group homomorphisms \((\varphi_i)_{i \in I}\), where \(\varphi_i\colon G_i \map H_i\) for all \(i \in I\), we denote the direct product of the groups by \(\bigtimes_{i \in I} G_i\) and the direct product of the group homomorphisms by \(\bigtimes_{i \in I} \varphi_i\colon \bigtimes_{i \in I} G_i \map \bigtimes_{i \in I} H_i, (g_i)_{i \in I} \mapsto (g_i \varphi_i)_{i \in I}\).
\end{itemize}

\section{Simplicial preliminaries}

We recall some standard definitions, cf.\ for example \cite{cursht}, \cite{gjsht} or \cite{maysoiat}.

\subsection*{Simplicial objects}

For \(n \in \Naturals_0\), we let \(\CatNaturalNumber{n}\) denote the category induced by the totally ordered set \([0, n]\) with the natural order, and we let \(\SimplexTypes\) be the full subcategory in \(\Cat\) defined by \(\Ob \SimplexTypes := \{\CatNaturalNumber{n} \mid n \in \Naturals_0\}\).

The \newnotion{category of simplicial objects} \(\sCategory \mathcal{C}\) in a given category \(\mathcal{C}\) is defined to be the functor category \(\lcb \SimplexTypes^\op, \mathcal{C} \rcb\). Moreover, the \newnotion{category of bisimplicial objects} \(\sCategory[2] \mathcal{C}\) in \(\mathcal{C}\) is defined to be \(\lcb \SimplexTypes^\op \times \SimplexTypes^\op, \mathcal{C} \rcb\). The dual notion is that of the \newnotion{category} \(\csCategory \mathcal{C} := \lcb \SimplexTypes, \mathcal{C} \rcb\) \newnotion{of cosimplicial objects} in \(\mathcal{C}\).

For \(n \in \Naturals\), \(k \in [0, n]\), we let \(\CatNaturalNumber{n - 1} \morphism[\coface^k] \CatNaturalNumber{n}\) be the injection that omits \(k \in [0, n]\), and for \(n \in \Naturals_0\), \(k \in [0, n]\), we let \(\CatNaturalNumber{n + 1} \morphism[\codegeneracy^k] \CatNaturalNumber{n}\) be the surjection that repeats \(k \in [0, n]\). The images of the morphisms \(\coface^k\) resp.\ \(\codegeneracy^k\) under a simplicial object \(X\) in a given category \(\mathcal{C}\) are denoted by \(\face_k := X_{\coface^k}\), called the \(k\)-th \newnotion{face}, for \(k \in [0, n]\), \(n \in \Naturals\), resp.\ \(\degeneracy_k := X_{\codegeneracy^k}\), called the \(k\)-th \newnotion{degeneracy}, for \(k \in [0, n]\), \(n \in \Naturals_0\). Similarly, in a bisimplicial object \(X\) one defines \newnotion{horizontal} and \newnotion{vertical faces} resp.\ \newnotion{degeneracies}, \(\face_k^\mathrm{h} := X_{\coface^k, \id}\), \(\face_k^\mathrm{v} := X_{\id, \coface^k}\), \(\degeneracy_k^\mathrm{h} := X_{\codegeneracy^k, \id}\), \(\degeneracy_k^\mathrm{v} := X_{\id, \codegeneracy^k}\).

Moreover, we use the ascending and descending interval notation as introduced above for composites of faces resp.\ degeneracies, that is, we write \(\face_{\descendinginterval{j}{i}} := \face_j \face_{j - 1} \dots \face_i\) resp.\ \(\degeneracy_{\ascendinginterval{i}{j}} := \degeneracy_i \degeneracy_{i + 1} \dots \degeneracy_j\).

\subsection*{The nerve}

We suppose given a group \(G\). The \newnotion{nerve} of \(G\) is the simplicial set \(\Nerve G\) given by \(\Nerve_n G = G^{\times n}\) for all \(n \in \Naturals_0\) and by
\[(g_j)_{j \in \descendinginterval{n - 1}{0}} (\Nerve_\theta G) = (\prod_{j \in \descendinginterval{(i + 1) \theta - 1}{i \theta}} g_j)_{i \in \descendinginterval{m - 1}{0}}\]
for \((g_j)_{j \in \descendinginterval{n - 1}{0}} \in \Nerve_n G\) and \(\theta \in {_\SimplexTypes}(\CatNaturalNumber{m}, \CatNaturalNumber{n})\), where \(m, n \in \Naturals_0\).

Since the nerve construction is a functor \(\Grps \morphism[\Nerve] \sSets\), it can be applied dimensionwise to a simplicial group. This yields a functor \(\sGrps \morphism[\Nerve] \sSets[2]\).

\subsection*{From bisimplicial sets to simplicial sets}

We suppose given a bisimplicial set \(X\). There are two known ways to construct a simplicial set from \(X\), namely the diagonal simplicial set \(\DiagonalFunctor X\) and the total simplicial set \(\TotalSimplicialObject X\), see \cite[§ 3]{amotvkt}. We recall their definitions.

The \newnotion{diagonal simplicial set} \(\DiagonalFunctor X\) has entries \(\DiagonalFunctor_n X := X_{n, n}\) for \(n \in \Naturals_0\), while \(\DiagonalFunctor_\theta X := X_{\theta, \theta}\) for \(\theta \in {_\SimplexTypes}(\CatNaturalNumber{m}, \CatNaturalNumber{n})\), where \(m, n \in \Naturals_0\).

To introduce the total simplicial set of \(X\), we define the \newnotion{splitting at \(p \in [0, m]\)} of a morphism \(\CatNaturalNumber{m} \morphism[\theta] \CatNaturalNumber{n}\) in \(\SimplexTypes\) by \(\Spl_p(\theta) := (\Spl_{\leq p}(\theta), \Spl_{\geq p}(\theta))\), where
\[\CatNaturalNumber{p} \morphism[\Spl_{\leq p}] \CatNaturalNumber{p \theta} \text{ and } \CatNaturalNumber{m - p} \morphism[\Spl_{\geq p}] \CatNaturalNumber{n - p \theta}\]
are given by \(i \, \Spl_{\leq p}(\theta) := i \theta\) for \(i \in [0, p]\) and \(i \, \Spl_{\geq p}(\theta) := (i + p) \theta - p \theta\) for \(i \in [0, m - p]\). The \newnotion{total simplicial set} \(\TotalSimplicialObject X\) is defined by
\[\TotalSimplicialObject_n X := \big\{(x_q)_{q \in \descendinginterval{n}{0}} \in \bigtimes_{q \in \descendinginterval{n}{0}} X_{q, n - q} \, \big| \, x_q \face_q^\mathrm{h} = x_{q - 1} \face_0^\mathrm{v} \text{ for all } q \in \descendinginterval{n}{1} \big\} \text{ for } n \in \Naturals_0\]
and by
\[(x_q)_{q \in \descendinginterval{n}{0}} (\TotalSimplicialObject_\theta X) = (x_{p \theta} X_{\Spl_p(\theta)})_{p \in \descendinginterval{m}{0}}\]
for \((x_q)_{q \in \descendinginterval{n}{0}} \in \TotalSimplicialObject_n X\) and \(\theta \in {_\SimplexTypes}(\CatNaturalNumber{m}, \CatNaturalNumber{n})\), where \(m, n \in \Naturals_0\).

There is a natural transformation 
\[\DiagonalFunctor \morphism[\phi] \TotalSimplicialObject,\]
where \(\phi_X\) is given by \(x_n (\phi_X)_n = (x_n \face_{\descendinginterval{n}{q + 1}}^\mathrm{h} \face_{\descendinginterval{q - 1}{0}}^\mathrm{v})_{q \in \descendinginterval{n}{0}}\) for \(x_n \in \DiagonalFunctor_n X\), \(n \in \Naturals_0\), \(X \in \Ob \sSets[2]\); cf.\ \mbox{\cite[formula (1)]{crtrbtdatbcoabs}}.

\subsection*{The Kan classifying simplicial set}

We let \(G\) be a simplicial group. For a morphism \(\theta \in {_{\SimplexTypes}}(\CatNaturalNumber{m}, \CatNaturalNumber{n})\) and non-negative integers \(i \in [0, m]\), \(j \in [i \theta, n]\), we let \(\theta|_{\CatNaturalNumber{i}}^{\CatNaturalNumber{j}} \in {_{\SimplexTypes}}(\CatNaturalNumber{i}, \CatNaturalNumber{j})\) be defined by \(k \theta|_{\CatNaturalNumber{i}}^{\CatNaturalNumber{j}} := k \theta\) for \(k \in \CatNaturalNumber{i}\). \eigenname{Kan} constructed a reduced simplicial set \(\KanClassifyingSimplicialSet G\) by
\[\KanClassifyingSimplicialSet_n G := \bigtimes_{j \in \descendinginterval{n - 1}{0}} G_j \text{ for every } n \in \Naturals_0\]
and
\[(g_j)_{j \in \descendinginterval{n - 1}{0}} \KanClassifyingSimplicialSet_\theta G := (\prod_{j \in \descendinginterval{(i + 1) \theta - 1}{i \theta}} g_j G_{\theta|_{\CatNaturalNumber{i}}^{\CatNaturalNumber{j}}})_{i \in \descendinginterval{m - 1}{0}}\]
for \((g_j)_{j \in \descendinginterval{n - 1}{0}} \in \KanClassifyingSimplicialSet_n G\) and \(\theta \in {_{\SimplexTypes}}(\CatNaturalNumber{m}, \CatNaturalNumber{n})\), see \cite[Definition 10.3]{kanhtcg}. The simplicial set \(\KanClassifyingSimplicialSet G\) will be called the \newnotion{Kan classifying simplicial set} of \(G\).

\subsection*{Notions from simplicial homotopy theory}

For \(n \in \Naturals\), the \newnotion{standard \(n\)-simplex} \(\StandardSimplex{n}\) in the category \(\sSets\) is defined to be the functor \(\SimplexTypes^\op \morphism \Sets\) represented by \(\CatNaturalNumber{n}\), that is, \(\StandardSimplex{n} := {_{\SimplexTypes}}(\bullet, \CatNaturalNumber{n})\). These simplicial sets yield a cosimplicial object \(\StandardSimplex{-} \in \csCategory(\sSets)\). We set \(\face^l := \StandardSimplex{\coface^l} \in {_{\sSets}}(\StandardSimplex{0}, \StandardSimplex{1})\) for \(l \in [0, 1]\).

For a simplicial set \(X\) we define \(\ins_0\) resp.\ \(\ins_1\) to be the composite morphisms
\[X \morphism[\isomorphic] X \times \StandardSimplex{0} \morphism[\id \times \face^1] X \times \StandardSimplex{1} \text{ resp.\ } X \morphism[\isomorphic] X \times \StandardSimplex{0} \morphism[\id \times \face^0] X \times \StandardSimplex{1},\]
where the cartesian product is defined dimensionwise and the isomorphisms are canonical.

For \(k \in [0, n + 1]\), \(n \in \Naturals_0\), we let \(\cotime^k \in \StandardSimplex{1}_n = {_{\SimplexTypes}}(\CatNaturalNumber{n}, \CatNaturalNumber{1})\) be the morphism given by \([0, n - k] \cotime^k = \{0\}\) and \([n - k + 1, n] \cotime^k = \{1\}\). Note that \((x_n) (\ins_0)_n = (x_n, \cotime^0)\) and \((x_n) (\ins_1)_n = (x_n, \cotime^{n + 1})\) for \(x_n \in X_n\).

In the following, we assume given simplicial sets \(X\) and \(Y\).

Simplicial maps \(f, g \in {_{\sSets}}(X, Y)\) are said to be \newnotion{simplicially homotopic}, written \(f \homotopic g\), if there exists a simplicial map \(X \times \StandardSimplex{1} \morphism[H] Y\) such that \(\ins_0 H = f\) and \(\ins_1 H = g\). In this case, \(H\) is called a \newnotion{simplicial homotopy} from \(f\) to \(g\).

The simplicial sets \(X\) and \(Y\) are said to be simplicially homotopy equivalent if there are simplicial maps \(X \morphism[f] Y\) and \(Y \morphism[g] X\) such that \(f g \homotopic \id_X\) and \(g f \homotopic \id_Y\). In this case we write \(X \homotopyequivalent Y\) and we call \(f\) and \(g\) mutually inverse \newnotion{simplicial homotopy equivalences}.

Finally, we suppose given a dimensionwise injective simplicial map \(Y \morphism[i] X\), that is, \(i_n\) is assumed to be injective for all \(n \in \Naturals_0\). We call \(Y\) a \newnotion{simplicial deformation retract} of \(X\) if there exists a simplicial map \(X \morphism[r] Y\) such that \(i r = \id_Y\) and \(r i \homotopic \id_X\). In this case, \(r\) is said to be a \newnotion{simplicial deformation retraction}. If there exists a homotopy \(r i \morphism[H] \id_X\) which is \newnotion{constant along \(i\)}, that is, if \((y_n i_n, \cotime^k) H_n = y_n i_n f_n = y_n i_n g_n\) for \(y_n \in Y_n\), \(k \in [0, n + 1]\), \(n \in \Naturals_0\), then we call \(Y\) a \newnotion{strong} simplicial deformation retract of \(X\) and \(r\) a \newnotion{strong} simplicial deformation retraction.

\section{Comparing \texorpdfstring{$\KanClassifyingSimplicialSet$}{Wbar} and \texorpdfstring{$\DiagonalFunctor \comp \Nerve$}{Diag N}}

We have \(\KanClassifyingSimplicialSet \isomorphic \TotalSimplicialObject \comp \Nerve\). The natural transformation \(\DiagonalFunctor \morphism[\phi] \TotalSimplicialObject\) composed with the nerve functor \(\Nerve\) yields a natural transformation
\[\DiagonalFunctor \comp \Nerve \morphism[D] \KanClassifyingSimplicialSet,\]
given by \((D_G)_n = \bigtimes_{i \in \descendinginterval{n - 1}{0}} \face_{\descendinginterval{n}{i + 1}}\colon \DiagonalFunctor_n \Nerve G \map \KanClassifyingSimplicialSet_n G\) for \(n \in \Naturals_0\) and \(G \in \Ob \sGrps\).

\begin{proposition*}
The natural transformation \(D\) is a retraction. A corresponding coretraction is given by
\[\KanClassifyingSimplicialSet \morphism[S] \DiagonalFunctor \circ \Nerve,\]
where
\[(S_G)_n\colon \KanClassifyingSimplicialSet_n G \map \DiagonalFunctor_n \Nerve G, (g_i)_{i \in \descendinginterval{n - 1}{0}} \mapsto (y_i)_{i \in \descendinginterval{n - 1}{0}}\]
with, defined by descending recursion,
\[y_i := \prod_{j \in \ascendinginterval{i + 1}{n - 1}} (y_j^{- 1} \face_{\descendinginterval{j}{i + 1}} \degeneracy_{\ascendinginterval{i}{j - 1}}) \prod_{j \in \descendinginterval{n - 1}{i}} (g_j \face_{\descendinginterval{j}{i + 1}} \degeneracy_{\ascendinginterval{i}{n - 1}}) \in G_n\]
for each \(i \in \descendinginterval{n - 1}{0}\), \(n \in \Naturals_0\), \(G \in \Ob \sGrps\).
\end{proposition*}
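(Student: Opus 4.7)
The argument splits into two independent verifications: (a) that the dimensionwise formulas assemble into an honest natural transformation \(S\colon \KanClassifyingSimplicialSet \map \DiagonalFunctor \comp \Nerve\) of functors \(\sGrps \map \sSets\), and (b) that \(S D = \id_{\KanClassifyingSimplicialSet}\), which dimensionwise unwinds to the identity \(y_i \face_{\descendinginterval{n}{i + 1}} = g_i\) for every \(i \in \descendinginterval{n-1}{0}\).

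I would tackle (b) first, by descending induction on \(i\). The base case \(i = n - 1\) reduces to \(g_{n - 1} \degeneracy_{n - 1} \face_n = g_{n - 1}\), immediate from the simplicial identity \(\degeneracy_{n - 1} \face_n = \id\). For the inductive step, one applies \(\face_{\descendinginterval{n}{i + 1}}\) to the defining expression of \(y_i\), using that faces of a simplicial group are group homomorphisms to distribute the operator over the two products. The crux is then the two collapsing identities
\[\face_{\descendinginterval{j}{i + 1}} \degeneracy_{\ascendinginterval{i}{j - 1}} \face_{\descendinginterval{n}{i + 1}} = \face_{\descendinginterval{n}{i + 1}} \quad \text{and} \quad \degeneracy_{\ascendinginterval{i}{n - 1}} \face_{\descendinginterval{n}{i + 1}} = \id,\]
both of which follow by iteratively commuting each degeneracy past an outlying face via \(\degeneracy_k \face_l = \face_{l - 1} \degeneracy_k\) for \(l > k + 1\), until it meets its neighbour and cancels via \(\degeneracy_k \face_{k + 1} = \id\). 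Combined with the inductive hypothesis \(y_j \face_{\descendinginterval{n}{j+1}} = g_j\) for \(j > i\), the first product becomes the inverse of the portion of the second product indexed by \(j > i\), and only the bottom term \(g_i \face_{\descendinginterval{i}{i + 1}} = g_i\) survives.

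For (a), naturality in \(G\) is immediate from the shape of the formula: only group operations and simplicial operators of \(G\) appear, both of which are respected by morphisms in \(\sGrps\). The substantial point is that \((S_G)_n\) commutes with every face and every degeneracy, so that \(S_G\) really is a simplicial map. This amounts to spelling out the formulas for \(\KanClassifyingSimplicialSet_{\coface^k} G\) and \(\KanClassifyingSimplicialSet_{\codegeneracy^k} G\), and for the corresponding diagonal operators on \(\DiagonalFunctor \Nerve G\), and then matching the two sides by inducting on the recursion defining \((y_i)\), again via the simplicial identities together with the fact that all operators involved are group homomorphisms.

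I expect this last compatibility check to be the main obstacle: although each individual manipulation is a mechanical juggling of simplicial identities, the Kan face formula has distinct cases according to whether the index \(i\) sits strictly above, at, or strictly below the ``omitted'' point \(k - 1\) of \(\coface^k\), and each case has to be threaded through the recursion; the degeneracies require a parallel but separate argument. By contrast, once the two collapsing identities above are in hand, part (b) is a single clean induction.
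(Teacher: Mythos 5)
Your outline follows essentially the same route as the paper's proof: part (b) is established exactly as you describe, via the two collapsing identities \(\face_{\descendinginterval{j}{i+1}} \degeneracy_{\ascendinginterval{i}{j-1}} \face_{\descendinginterval{n}{i+1}} = \face_{\descendinginterval{n}{i+1}}\) and \(\degeneracy_{\ascendinginterval{i}{n-1}} \face_{\descendinginterval{n}{i+1}} = \id\) together with a telescoping induction on \(i\), while part (a) is a three-way case split (\(i\) above, at, or below \(k-1\)) threaded through the recursion, done once for the faces and once, in parallel, for the degeneracies. You correctly identify that part (a) carries the bulk of the technical work; the only (cosmetic) difference is that the paper presents (a) before (b).
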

\begin{proof}
We suppose given a simplicial group \(G\). Then we have to show that the maps \((S_G)_n\) for \(n \in \Naturals_0\) commute with the faces and degeneracies of \(G\).

First, we consider the faces. We let \(n \in \Naturals\) and \(k \in [0, n]\). For an \(n\)-tuple \((g_i)_{i \in \descendinginterval{n - 1}{0}} \in \KanClassifyingSimplicialSet_n G\) we compute
\[(g_i)_{i \in \descendinginterval{n - 1}{0}} \face_k (S_G)_{n - 1} = (f_i)_{i \in \descendinginterval{n - 2}{0}} (S_G)_{n - 1} = (x_i)_{i \in \descendinginterval{n - 2}{0}},\]
where
\[f_i := \begin{cases} g_{i + 1} \face_k & \text{for } i \in \descendinginterval{n - 2}{k}, \\ (g_k \face_k) g_{k - 1} & \text{for } i = k - 1, \\ g_i & \text{for } i \in \descendinginterval{k - 2}{0} \end{cases}\]
and
\[x_i := \prod_{j \in \ascendinginterval{i + 1}{n - 2}} (x_j^{- 1} \face_{\descendinginterval{j}{i + 1}} \degeneracy_{\ascendinginterval{i}{j - 1}}) \prod_{j \in \descendinginterval{n - 2}{i}} (f_j \face_{\descendinginterval{j}{i + 1}} \degeneracy_{\ascendinginterval{i}{n - 2}}) \text{ for each } i \in \descendinginterval{n - 2}{0}.\]
On the other hand, we get
\[(g_i)_{i \in \descendinginterval{n - 1}{0}} (S_G)_n \face_k = (y_i)_{i \in \descendinginterval{n - 1}{0}} \face_k = (x_i')_{i \in \descendinginterval{n - 2}{0}}\]
with
\[y_i := \prod_{j \in \ascendinginterval{i + 1}{n - 1}} (y_j^{- 1} \face_{\descendinginterval{j}{i + 1}} \degeneracy_{\ascendinginterval{i}{j - 1}}) \prod_{j \in \descendinginterval{n - 1}{i}} (g_j \face_{\descendinginterval{j}{i + 1}} \degeneracy_{\ascendinginterval{i}{n - 1}}) \text{ for } i \in \descendinginterval{n - 1}{0}\]
and
\[x_i' := \begin{cases} y_{i + 1} \face_k & \text{for } i \in \descendinginterval{n - 2}{k}, \\ (y_k \face_k) (y_{k - 1} \face_k) & \text{for } i = k - 1, \\ y_i \face_k & \text{for } i \in \descendinginterval{k - 2}{0}. \end{cases}\]
We have to show that \(x_i = x_i'\) for all \(i \in \descendinginterval{n - 2}{0}\). To this end, we proceed by induction on \(i\).

For \(i \in \descendinginterval{n - 2}{k}\), we calculate
\begin{align*}
x_i & = \prod_{j \in \ascendinginterval{i + 1}{n - 2}} (x_j^{- 1} \face_{\descendinginterval{j}{i + 1}} \degeneracy_{\ascendinginterval{i}{j - 1}}) \prod_{j \in \descendinginterval{n - 2}{i}} (f_j \face_{\descendinginterval{j}{i + 1}} \degeneracy_{\ascendinginterval{i}{n - 2}}) \\
& = \prod_{j \in \ascendinginterval{i + 1}{n - 2}} ({x_j'}^{- 1} \face_{\descendinginterval{j}{i + 1}} \degeneracy_{\ascendinginterval{i}{j - 1}}) \prod_{j \in \descendinginterval{n - 2}{i}} (f_j \face_{\descendinginterval{j}{i + 1}} \degeneracy_{\ascendinginterval{i}{n - 2}}) \\
& = \prod_{j \in \ascendinginterval{i + 1}{n - 2}} (y_{j + 1}^{- 1} \face_k \face_{\descendinginterval{j}{i + 1}} \degeneracy_{\ascendinginterval{i}{j - 1}}) \prod_{j \in \descendinginterval{n - 2}{i}} (g_{j + 1} \face_k \face_{\descendinginterval{j}{i + 1}} \degeneracy_{\ascendinginterval{i}{n - 2}}) \\
& = \Big( \prod_{j \in \ascendinginterval{i + 2}{n - 1}} (y_j^{- 1} \face_{\descendinginterval{j}{i + 2}} \degeneracy_{\ascendinginterval{i + 1}{j - 1}}) \prod_{j \in \descendinginterval{n - 1}{i + 1}} (g_j \face_{\descendinginterval{j}{i + 2}} \degeneracy_{\ascendinginterval{i + 1}{n - 1}}) \Big) \face_k = y_{i + 1} \face_k.
\end{align*}
For \(i = k - 1\), we have
\begin{align*}
x_{k - 1} & = \prod_{j \in \ascendinginterval{k}{n - 2}} (x_j^{- 1} \face_{\descendinginterval{j}{k}} \degeneracy_{\ascendinginterval{k - 1}{j - 1}}) \prod_{j \in \descendinginterval{n - 2}{k - 1}} (f_j \face_{\descendinginterval{j}{k}} \degeneracy_{\ascendinginterval{k - 1}{n - 2}}) \\
& = \prod_{j \in \ascendinginterval{k}{n - 2}} ({x_j'}^{- 1} \face_{\descendinginterval{j}{k}} \degeneracy_{\ascendinginterval{k - 1}{j - 1}}) \prod_{j \in \descendinginterval{n - 2}{k - 1}} (f_j \face_{\descendinginterval{j}{k}} \degeneracy_{\ascendinginterval{k - 1}{n - 2}}) \\
& = \prod_{j \in \ascendinginterval{k}{n - 2}} (y_{j + 1}^{- 1} \face_k \face_{\descendinginterval{j}{k}} \degeneracy_{\ascendinginterval{k - 1}{j - 1}}) \prod_{j \in \descendinginterval{n - 2}{k}} (g_{j + 1} \face_k \face_{\descendinginterval{j}{k}} \degeneracy_{\ascendinginterval{k - 1}{n - 2}}) \cdot ((g_k \face_k) g_{k - 1}) \degeneracy_{\ascendinginterval{k - 1}{n - 2}} \\
& = \prod_{j \in \ascendinginterval{k + 1}{n - 1}} (y_j^{- 1} \face_{\descendinginterval{j}{k}} \degeneracy_{\ascendinginterval{k - 1}{j - 2}}) \prod_{j \in \descendinginterval{n - 1}{k - 1}} (g_j \face_{\descendinginterval{j}{k}} \degeneracy_{\ascendinginterval{k - 1}{n - 2}}) \\
& = (y_k \face_k) \prod_{j \in \ascendinginterval{k}{n - 1}} (y_j^{- 1} \face_{\descendinginterval{j}{k}} \degeneracy_{\ascendinginterval{k - 1}{j - 2}}) \prod_{j \in \descendinginterval{n - 1}{k - 1}} (g_j \face_{\descendinginterval{j}{k}} \degeneracy_{\ascendinginterval{k - 1}{n - 2}}) \\
& = (y_k \face_k) \Big( \prod_{j \in \ascendinginterval{k}{n - 1}} (y_j^{- 1} \face_{\descendinginterval{j}{k}} \degeneracy_{\ascendinginterval{k - 1}{j - 1}}) \prod_{j \in \descendinginterval{n - 1}{k - 1}} (g_j \face_{\descendinginterval{j}{k}} \degeneracy_{\ascendinginterval{k - 1}{n - 1}}) \Big) \face_k \\
& = (y_k \face_k) (y_{k - 1} \face_k).
\end{align*}
For \(i \in \descendinginterval{k - 2}{0}\), we finally get
\begin{align*}
x_i & = \prod_{j \in \ascendinginterval{i + 1}{n - 2}} (x_j^{- 1} \face_{\descendinginterval{j}{i + 1}} \degeneracy_{\ascendinginterval{i}{j - 1}}) \prod_{j \in \descendinginterval{n - 2}{i}} (f_j \face_{\descendinginterval{j}{i + 1}} \degeneracy_{\ascendinginterval{i}{n - 2}}) \\
& = \prod_{j \in \ascendinginterval{i + 1}{n - 2}} ({x_j'}^{- 1} \face_{\descendinginterval{j}{i + 1}} \degeneracy_{\ascendinginterval{i}{j - 1}}) \prod_{j \in \descendinginterval{n - 2}{i}} (f_j \face_{\descendinginterval{j}{i + 1}} \degeneracy_{\ascendinginterval{i}{n - 2}}) \\
& = (\prod_{j \in \ascendinginterval{i + 1}{k - 2}} (y_j^{- 1} \face_k \face_{\descendinginterval{j}{i + 1}} \degeneracy_{\ascendinginterval{i}{j - 1}})) ((y_k y_{k - 1})^{- 1} \face_k \face_{\descendinginterval{k - 1}{i + 1}} \degeneracy_{\ascendinginterval{i}{k - 2}}) \\
& \qquad \cdot (\prod_{j \in \ascendinginterval{k}{n - 2}} (y_{j + 1}^{- 1} \face_k \face_{\descendinginterval{j}{i + 1}} \degeneracy_{\ascendinginterval{i}{j - 1}})) (\prod_{j \in \descendinginterval{n - 2}{k}} (y_{j + 1} \face_k \face_{\descendinginterval{j}{i + 1}} \degeneracy_{\ascendinginterval{i}{n - 2}})) \\
& \qquad \cdot (((g_k \face_k) g_{k - 1}) \face_{\descendinginterval{k - 1}{i + 1}} \degeneracy_{\ascendinginterval{i}{n - 2}}) (\prod_{j \in \descendinginterval{k - 2}{i}} (g_j \face_{\descendinginterval{j}{i + 1}} \degeneracy_{\ascendinginterval{i}{n - 2}})) \\
& = \prod_{j \in \ascendinginterval{i + 1}{k - 1}} (y_j^{- 1} \face_k \face_{\descendinginterval{j}{i + 1}} \degeneracy_{\ascendinginterval{i}{j - 1}}) \prod_{j \in \ascendinginterval{k}{n - 1}} (y_j^{- 1} \face_{\descendinginterval{j}{i + 1}} \degeneracy_{\ascendinginterval{i}{j - 2}}) \\
& \qquad \cdot \prod_{j \in \descendinginterval{n - 1}{k}} (g_j \face_{\descendinginterval{j}{i + 1}} \degeneracy_{\ascendinginterval{i}{n - 2}}) \prod_{j \in \descendinginterval{k - 1}{i}} (g_j \face_{\descendinginterval{j}{i + 1}} \degeneracy_{\ascendinginterval{i}{n - 2}}) \\
& = \Big( \prod_{j \in \ascendinginterval{i + 1}{n - 1}} (y_j^{- 1} \face_{\descendinginterval{j}{i + 1}} \degeneracy_{\ascendinginterval{i}{j - 1}}) \prod_{j \in \descendinginterval{n - 1}{i}} (g_j \face_{\descendinginterval{j}{i + 1}} \degeneracy_{\ascendinginterval{i}{n - 1}}) \Big) \face_k = y_i \face_k.
\end{align*}

Next, we come to the degeneracies. We let \(n \in \Naturals_0\), \(k \in [0, n]\) and \((g_i)_{i \in \descendinginterval{n - 1}{0}} \in \KanClassifyingSimplicialSet_n G\). Then we have
\[(g_i)_{i \in \descendinginterval{n - 1}{0}} \degeneracy_k (S_G)_{n + 1} = (h_i)_{i \in \descendinginterval{n}{0}} (S_G)_{n + 1} = (z_i)_{i \in \descendinginterval{n}{0}},\]
where
\[h_i := \begin{cases} g_{i - 1} \degeneracy_k & \text{for } i \in \descendinginterval{n}{k + 1}, \\ 1 & \text{for } i = k, \\ g_i & \text{for } i \in \descendinginterval{k - 1}{0} \end{cases}\]
and
\[z_i := \prod_{j \in \ascendinginterval{i + 1}{n}} (z_j^{- 1} \face_{\descendinginterval{j}{i + 1}} \degeneracy_{\ascendinginterval{i}{j - 1}}) \prod_{j \in \descendinginterval{n}{i}} (h_j \face_{\descendinginterval{j}{i + 1}} \degeneracy_{\ascendinginterval{i}{n}}) \text{ for each } i \in \descendinginterval{n}{0}.\]
Further, we get
\[(g_i)_{i \in \descendinginterval{n - 1}{0}} (S_G)_n \degeneracy_k = (y_i)_{i \in \descendinginterval{n - 1}{0}} \degeneracy_k = (z_i')_{i \in \descendinginterval{n}{0}}\]
with
\[y_i := \prod_{j \in \ascendinginterval{i + 1}{n - 1}} (y_j^{- 1} \face_{\descendinginterval{j}{i + 1}} \degeneracy_{\ascendinginterval{i}{j - 1}}) \prod_{j \in \descendinginterval{n - 1}{i}} (g_j \face_{\descendinginterval{j}{i + 1}} \degeneracy_{\ascendinginterval{i}{n - 1}}) \text{ for } i \in \descendinginterval{n - 1}{0}\]
and
\[z_i' := \begin{cases} y_{i - 1} \degeneracy_k & \text{for } i \in \descendinginterval{n}{k + 1}, \\ 1 & \text{for } i = k, \\ y_i \degeneracy_k & \text{for } i \in \descendinginterval{k - 1}{0}. \end{cases}\]
Thus we have to show that \(z_i = z_i'\) for every \(i \in \descendinginterval{n}{0}\). To this end, we perform an induction on \(i \in \descendinginterval{n}{0}\).

For \(i \in \descendinginterval{n}{k + 1}\), we have
\begin{align*}
z_i & = \prod_{j \in \ascendinginterval{i + 1}{n}} (z_j^{- 1} \face_{\descendinginterval{j}{i + 1}} \degeneracy_{\ascendinginterval{i}{j - 1}}) \prod_{j \in \descendinginterval{n}{i}} (h_j \face_{\descendinginterval{j}{i + 1}} \degeneracy_{\ascendinginterval{i}{n}}) \\
& = \prod_{j \in \ascendinginterval{i + 1}{n}} ({z_j'}^{- 1} \face_{\descendinginterval{j}{i + 1}} \degeneracy_{\ascendinginterval{i}{j - 1}}) \prod_{j \in \descendinginterval{n}{i}} (h_j \face_{\descendinginterval{j}{i + 1}} \degeneracy_{\ascendinginterval{i}{n}}) \\
& = \prod_{j \in \ascendinginterval{i + 1}{n}} (y_{j - 1}^{- 1} \degeneracy_k \face_{\descendinginterval{j}{i + 1}} \degeneracy_{\ascendinginterval{i}{j - 1}}) \prod_{j \in \descendinginterval{n}{i}} (g_{j - 1} \degeneracy_k \face_{\descendinginterval{j}{i + 1}} \degeneracy_{\ascendinginterval{i}{n}}) \\
& = \Big(\prod_{j \in \ascendinginterval{i}{n - 1}} (y_j^{- 1} \face_{\descendinginterval{j}{i}} \degeneracy_{\ascendinginterval{i - 1}{j - 1}}) \prod_{j \in \descendinginterval{n - 1}{i - 1}} (g_j \face_{\descendinginterval{j}{i}} \degeneracy_{\ascendinginterval{i - 1}{n - 1}}) \Big) \degeneracy_k = y_{i - 1} \degeneracy_k.
\end{align*}
For \(i = k\), we compute
\begin{align*}
z_k & = \prod_{j \in \ascendinginterval{k + 1}{n}} (z_j^{- 1} \face_{\descendinginterval{j}{k + 1}} \degeneracy_{\ascendinginterval{k}{j - 1}}) \prod_{j \in \descendinginterval{n}{k}} (h_j \face_{\descendinginterval{j}{k + 1}} \degeneracy_{\ascendinginterval{k}{n}}) \\
& = \prod_{j \in \ascendinginterval{k + 1}{n}} ({z_j'}^{- 1} \face_{\descendinginterval{j}{k + 1}} \degeneracy_{\ascendinginterval{k}{j - 1}}) \prod_{j \in \descendinginterval{n}{k}} (h_j \face_{\descendinginterval{j}{k + 1}} \degeneracy_{\ascendinginterval{k}{n}}) \\
& = \prod_{j \in \ascendinginterval{k + 1}{n}} (y_{j - 1}^{- 1} \degeneracy_k \face_{\descendinginterval{j}{k + 1}} \degeneracy_{\ascendinginterval{k}{j - 1}}) \prod_{j \in \descendinginterval{n}{k + 1}} (g_{j - 1} \degeneracy_k \face_{\descendinginterval{j}{k + 1}} \degeneracy_{\ascendinginterval{k}{n}}) \\
& = \prod_{j \in \ascendinginterval{k + 1}{n}} (y_{j - 1}^{- 1} \face_{\descendinginterval{j - 1}{k + 1}} \degeneracy_{\ascendinginterval{k}{j - 1}}) \prod_{j \in \descendinginterval{n}{k + 1}} (g_{j - 1} \face_{\descendinginterval{j - 1}{k + 1}} \degeneracy_{\ascendinginterval{k}{n}}) \\
& = \prod_{j \in \ascendinginterval{k + 1}{n}} (y_{j - 1}^{- 1} \degeneracy_k \face_{\descendinginterval{j}{k + 2}} \degeneracy_{\ascendinginterval{k + 1}{j - 1}}) \prod_{j \in \descendinginterval{n}{k + 1}} (g_{j - 1} \degeneracy_k \face_{\descendinginterval{j}{k + 2}} \degeneracy_{\ascendinginterval{k + 1}{n}}) \\
& = \prod_{j \in \ascendinginterval{k + 1}{n}} ({z_j'}^{- 1} \face_{\descendinginterval{j}{k + 2}} \degeneracy_{\ascendinginterval{k + 1}{j - 1}}) \prod_{j \in \descendinginterval{n}{k + 1}} (h_j \face_{\descendinginterval{j}{k + 2}} \degeneracy_{\ascendinginterval{k + 1}{n}}) \\
& = z_{k + 1}^{- 1} \prod_{j \in \ascendinginterval{k + 2}{n}} (z_j^{- 1} \face_{\descendinginterval{j}{k + 2}} \degeneracy_{\ascendinginterval{k + 1}{j - 1}}) \prod_{j \in \descendinginterval{n}{k + 1}} (h_j \face_{\descendinginterval{j}{k + 2}} \degeneracy_{\ascendinginterval{k + 1}{n}}) = z_{k + 1}^{- 1} z_{k + 1} = 1.
\end{align*}
For \(i \in \descendinginterval{k - 1}{0}\), we get
\begin{align*}
z_i & = \prod_{j \in \ascendinginterval{i + 1}{n}} (z_j^{- 1} \face_{\descendinginterval{j}{i + 1}} \degeneracy_{\ascendinginterval{i}{j - 1}}) \prod_{j \in \descendinginterval{n}{i}} (h_j \face_{\descendinginterval{j}{i + 1}} \degeneracy_{\ascendinginterval{i}{n}}) \\
& = \prod_{j \in \ascendinginterval{i + 1}{n}} ({z_j'}^{- 1} \face_{\descendinginterval{j}{i + 1}} \degeneracy_{\ascendinginterval{i}{j - 1}}) \prod_{j \in \descendinginterval{n}{i}} (h_j \face_{\descendinginterval{j}{i + 1}} \degeneracy_{\ascendinginterval{i}{n}}) \\
& = \prod_{j \in \ascendinginterval{i + 1}{k - 1}} (y_j^{- 1} \degeneracy_k \face_{\descendinginterval{j}{i + 1}} \degeneracy_{\ascendinginterval{i}{j - 1}}) \prod_{j \in \ascendinginterval{k + 1}{n}} (y_{j - 1}^{- 1} \degeneracy_k \face_{\descendinginterval{j}{i + 1}} \degeneracy_{\ascendinginterval{i}{j - 1}}) \\
& \qquad \cdot \prod_{j \in \descendinginterval{n}{k + 1}} (g_{j - 1} \degeneracy_k \face_{\descendinginterval{j}{i + 1}} \degeneracy_{\ascendinginterval{i}{n}}) \prod_{j \in \descendinginterval{k - 1}{i}} (g_j \face_{\descendinginterval{j}{i + 1}} \degeneracy_{\ascendinginterval{i}{n}}) \\
& = \prod_{j \in \ascendinginterval{i + 1}{k - 1}} (y_j^{- 1} \degeneracy_k \face_{\descendinginterval{j}{i + 1}} \degeneracy_{\ascendinginterval{i}{j - 1}}) \prod_{j \in \ascendinginterval{k}{n - 1}} (y_j^{- 1} \degeneracy_k \face_{\descendinginterval{j + 1}{i + 1}} \degeneracy_{\ascendinginterval{i}{j}}) \\
& \qquad \cdot \prod_{j \in \descendinginterval{n - 1}{k}} (g_j \degeneracy_k \face_{\descendinginterval{j + 1}{i + 1}} \degeneracy_{\ascendinginterval{i}{n}}) \prod_{j \in \descendinginterval{k - 1}{i}} (g_j \face_{\descendinginterval{j}{i + 1}} \degeneracy_{\ascendinginterval{i}{n}}) \\
& = \Big( \prod_{j \in \ascendinginterval{i + 1}{n - 1}} (y_j^{- 1} \face_{\descendinginterval{j}{i + 1}} \degeneracy_{\ascendinginterval{i}{j - 1}}) \prod_{j \in \descendinginterval{n - 1}{i}} (g_j \face_{\descendinginterval{j}{i + 1}} \degeneracy_{\ascendinginterval{i}{n - 1}}) \Big) \degeneracy_k = y_i \degeneracy_k.
\end{align*}
	
Thus \((S_G)_{n \in \Naturals}\) yields a simplicial map
\[\KanClassifyingSimplicialSet G \morphism[S_G] \DiagonalFunctor \Nerve G.\]

Finally, we have to prove that \(D_G\) is a retraction with coretraction \(S_G\), that is,
\[(S_G)_n (D_G)_n = \id_{\KanClassifyingSimplicialSet_n G} \text{ for all } n \in \Naturals_0.\]
Again, we let \((y_i)_{i \in \descendinginterval{n - 1}{0}}\) denote the image of an element \((g_i)_{i \in \descendinginterval{n - 1}{0}} \in \KanClassifyingSimplicialSet_n G\) under \((S_G)_n\). Then we have
\[(g_i)_{i \in \descendinginterval{n - 1}{0}} (S_G)_n (D_G)_n = (y_i)_{i \in \descendinginterval{n - 1}{0}} (D_G)_n = (y_i \face_{\descendinginterval{n}{i + 1}})_{i \in \descendinginterval{n - 1}{0}}.\]
Induction on \(i \in \descendinginterval{n - 1}{0}\) shows that
\begin{align*}
y_i \face_{\descendinginterval{n}{i + 1}} 
& = \prod_{j \in \ascendinginterval{i + 1}{n - 1}} (y_j^{- 1} \face_{\descendinginterval{j}{i + 1}} \degeneracy_{\ascendinginterval{i}{j - 1}} \face_{\descendinginterval{n}{i + 1}}) \prod_{j \in \descendinginterval{n - 1}{i}} (g_j \face_{\descendinginterval{j}{i + 1}} \degeneracy_{\ascendinginterval{i}{n - 1}} \face_{\descendinginterval{n}{i + 1}}) \\
& = \prod_{j \in \ascendinginterval{i + 1}{n - 1}} (y_j^{- 1} \face_{\descendinginterval{n}{i + 1}}) \prod_{j \in \descendinginterval{n - 1}{i}} (g_j \face_{\descendinginterval{j}{i + 1}}) \\
& = \prod_{j \in \ascendinginterval{i + 1}{n - 1}} (g_j^{- 1} \face_{\descendinginterval{j}{i + 1}}) \prod_{j \in \descendinginterval{n - 1}{i}} (g_j \face_{\descendinginterval{j}{i + 1}}) = g_i.
\end{align*}
This implies that \((S_G)_n (D_G)_n = \id_{\KanClassifyingSimplicialSet_n G}\) for all \(n \in \Naturals_0\).
\end{proof}

\begin{theorem*}
We suppose given a simplicial group \(G\). The Kan classifying simplicial set \(\KanClassifyingSimplicialSet G\) is a strong simplicial deformation retract of \(\DiagonalFunctor \Nerve G\) with a strong simplicial deformation retraction given by
\[\DiagonalFunctor \Nerve G \morphism[D_G] \KanClassifyingSimplicialSet G.\]
\end{theorem*}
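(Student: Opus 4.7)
The plan is to exhibit an explicit simplicial homotopy
\[\DiagonalFunctor \Nerve G \times \StandardSimplex{1} \morphism[H_G] \DiagonalFunctor \Nerve G\]
from \(D_G S_G\) to \(\id_{\DiagonalFunctor \Nerve G}\) which is stationary along the image of \(S_G\). The preceding proposition already supplies \((S_G)_n (D_G)_n = \id_{\KanClassifyingSimplicialSet_n G}\) for all \(n\), so \(S_G\) is automatically dimensionwise injective and \(D_G\) is a simplicial retraction with section \(S_G\); once the required homotopy exists, all the ingredients of the definition of a strong simplicial deformation retract are assembled.

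Because a simplicial map out of \(X \times \StandardSimplex{1}\) is determined by its values on pairs \((x, \cotime^k)\) for \(x \in X_n\), \(k \in [0, n+1]\), I would specify \(H_G\) by an interpolation formula: at \(\cotime^{n+1}\) return \(x\), at \(\cotime^{0}\) return \((S_G D_G)(x)\), and at an intermediate \(\cotime^k\) freeze the ``top'' \(k\) coordinates of \(x = (y_i)_{i \in \descendinginterval{n-1}{0}} \in G_n^{\times n}\) while reconstructing the remaining coordinates by the same descending recursion that defines \(S_G\). Concretely, set \(H_n(x, \cotime^k) = (z_i)_{i \in \descendinginterval{n-1}{0}}\) with \(z_i := y_i\) for \(i \in \descendinginterval{n-1}{n-k}\) and, for \(i \in \descendinginterval{n-k-1}{0}\),
\[z_i := \prod_{j \in \ascendinginterval{i+1}{n-1}} (z_j^{-1} \face_{\descendinginterval{j}{i+1}} \degeneracy_{\ascendinginterval{i}{j-1}}) \prod_{j \in \descendinginterval{n-1}{i}} (y_j \face_{\descendinginterval{n}{j+1}} \face_{\descendinginterval{j}{i+1}} \degeneracy_{\ascendinginterval{i}{n-1}}).\]
The value at \(\cotime^0\) matches \((S_G)_n (D_G)_n(x)\) by direct comparison with the proposition's formula (taking \(g_j := y_j \face_{\descendinginterval{n}{j+1}}\)), the value at \(\cotime^{n+1}\) is \(x\), and if \(x = (g) (S_G)_n\) for some \(g \in \KanClassifyingSimplicialSet_n G\) then the recursion produces the coordinates of \(x\) whether they are frozen or recomputed, yielding the strongness condition \(H_n((g)(S_G)_n, \cotime^k) = (g)(S_G)_n\) for every \(k\).

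The main obstacle is to verify that this family of formulas truly assembles into a simplicial map, i.e.\ that it commutes with \(\face_l\) and \(\degeneracy_l\) in the \(\DiagonalFunctor \Nerve G\) factor and is compatible with the simplicial structure on \(\StandardSimplex{1}\) (equivalently, with the action of \(\SimplexTypes\)-morphisms on the generators \(\cotime^k\)). Both checks split into cases according to whether the index \(l\) lies in the ``frozen'' range \([n-k, n-1]\) or in the ``recomputed'' range \([0, n-k-1]\), and within each case one argues by descending induction on the recursion index \(i\) using the simplicial identities together with the defining recursion for \(z_i\), in precisely the same spirit as the \(\face_k\) and \(\degeneracy_k\) calculations already carried out in the proof of the proposition but with the extra bookkeeping parameter \(k\). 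Once this is done, the endpoint identities \(\ins_0 H_G = D_G S_G\) and \(\ins_1 H_G = \id_{\DiagonalFunctor \Nerve G}\) are read off from the \(k = 0\) and \(k = n+1\) cases, completing the proof.
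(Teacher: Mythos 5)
Your overall plan — specify an explicit freeze-and-recompute interpolation $H_n(x, \cotime^k)$, read off the endpoints at $k = 0$ and $k = n+1$, and argue strongness from $S_G D_G S_G = S_G$ — is precisely the paper's strategy, and your preliminary remarks (that $S_G D_G = \id$ forces $S_G$ to be injective, and that the value at $\cotime^0$ must be $(S_G D_G)(x)$) are correct. However, the specific interpolation formula you write down is not the one that works: it is not the paper's formula, and it fails to be a simplicial map. Take $n = 2$ and $\cotime^1$ (so $k = 1$ in your indexing). Your recursion yields $z_1 = y_1$ and $z_0 = (y_1^{-1} \face_1 \degeneracy_0)(y_1 \face_2 \face_1 \degeneracy_0 \degeneracy_1)(y_0 \face_2 \face_1 \degeneracy_0 \degeneracy_1)$. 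Since $\cotime^1 \face_1 = \cotime^1$ in $\StandardSimplex{1}_1$ and your $H_1(-, \cotime^1)$ is the identity (all of $\descendinginterval{0}{0}$ is frozen), compatibility with $\face_1$ requires $(z_1 \face_1)(z_0 \face_1) = (y_1 \face_1)(y_0 \face_1)$. Using $\degeneracy_0 \face_1 = \id$ and $\degeneracy_1 \face_1 = \id$, the left side collapses to $(y_1 \face_2 \face_1 \degeneracy_0)(y_0 \face_2 \face_1 \degeneracy_0)$, so one would need $\face_2 \face_1 \degeneracy_0 = \face_1$ as maps $G_2 \to G_1$; but the left-hand map factors through $G_0$ and the right-hand map does not, so they already disagree for $G = \Integers[\StandardSimplex{1}]$.

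The source of the failure is that, when recomputing the coordinate $z_i$ at an intermediate time $\cotime^k$, your recursion (a) still ranges over all $j \in \ascendinginterval{i+1}{n-1}$, including the frozen indices, and (b) uses the full-depth operator $\face_{\descendinginterval{n}{i+1}} \degeneracy_{\ascendinginterval{i}{n-1}}$ inherited from the $\cotime^0$ endpoint. The homotopy that actually works truncates both: the inner recursion ranges only over the recompute range (it never sees the frozen coordinates), and the $\face$-$\degeneracy$ operator descends only as far as the frozen boundary and back — in the paper's labelling, $\face_{\descendinginterval{k-1}{i+1}} \degeneracy_{\ascendinginterval{i}{k-2}}$ rather than $\face_{\descendinginterval{n}{i+1}} \degeneracy_{\ascendinginterval{i}{n-1}}$. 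With your formula neither the $\face_l$ nor the $\degeneracy_l$ compatibility checks go through, so the proposal has a genuine gap at the central step: exhibiting the simplicial homotopy.
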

\begin{proof}
We consider the coretraction \(\KanClassifyingSimplicialSet \morphism[S] \DiagonalFunctor \Nerve\) as in the preceding proposition. Now, we shall show that \(D_G S_G \homotopic \id_{\DiagonalFunctor \Nerve G}\) via a simplicial homotopy constant along \(S_G\).

A simplicial homotopy \(H\) from \(D_G S_G\) to \(\id_{\DiagonalFunctor \Nerve G}\) is given by
\[H_n\colon \DiagonalFunctor_n \Nerve G \times \StandardSimplex{1}_n \map \DiagonalFunctor_n \Nerve G, ((g_{n, i})_{i \in \descendinginterval{n - 1}{0}}, \cotime^{n + 1 - k}) \mapsto (y_i^{(n + 1 - k)})_{i \in \descendinginterval{n - 1}{0}}\]
for all \(n \in \Naturals_0\), where \(k \in [0, n + 1]\) and, defined by descending recursion,
\[y_i^{(n + 1 - k)} := \begin{cases} g_{n, i} & \text{for } i \in \descendinginterval{n - 1}{k - 1} \cap \Naturals_0, \\ \prod_{j \in \ascendinginterval{i + 1}{k - 2}} ((y_j^{(n + 1 - k)})^{- 1} \face_{\descendinginterval{j}{i + 1}} \degeneracy_{\ascendinginterval{i}{j - 1}}) \\ \qquad \cdot \prod_{j \in \descendinginterval{k - 2}{i}} (g_{n, j} \face_{\descendinginterval{k - 1}{i + 1}} \degeneracy_{\ascendinginterval{i}{k - 2}}) & \text{for } i \in \descendinginterval{k - 2}{0}. \end{cases}\]

To facilitate the following calculations, we abbreviate \(\tilde y_i := y_i^{(n + 1 - k)}\) for the respective index \(k \in [0, n]\) under consideration, if no confusion can arise.


We have to verify that the maps \(H_n\) for \(n \in \Naturals_0\) yield a simplicial map.

First, we show the compatibility with the faces. For \(k \in [0, n]\), \(l \in [0, n + 1]\), \(n \in \Naturals_0\), \((g_{n, i})_{i \in \descendinginterval{n - 1}{0}} \in \DiagonalFunctor_n \Nerve G\), we have
\begin{align*}
((g_{n, i})_{i \in \descendinginterval{n - 1}{0}}, \cotime^{n + 1 - l}) \face_k H_{n - 1} & = ((g_{n, i})_{i \in \descendinginterval{n - 1}{0}} \face_k, \cotime^{n + 1 - l} \face_k) H_{n - 1} = ((f_i)_{i \in \descendinginterval{n - 2}{0}}, \coface^k \cotime^{n + 1 - l}) H_{n - 1} \\
& = \left. \begin{cases} ((f_i)_{i \in \descendinginterval{n - 2}{0}}, \cotime^{n - l}) H_{n - 1} & \text{for } k \geq l, \\ ((f_i)_{i \in \descendinginterval{n - 2}{0}}, \cotime^{n + 1 - l}) H_{n - 1} & \text{for } k < l \end{cases} \right\} = (\tilde x_i)_{i \in \descendinginterval{n - 2}{0}},
\end{align*}
where
\[f_i := \begin{cases} g_{n, i + 1} \face_k & \text{for } i \in \descendinginterval{n - 2}{k}, \\ (g_{n, k} \face_k) (g_{n, k - 1} \face_k) & \text{for } i = k - 1, \\ g_{n, i} \face_k & \text{for } i \in \descendinginterval{k - 2}{0} \end{cases}\]
for all \(i \in \descendinginterval{n - 2}{0}\) and
\[\tilde x_i := \begin{cases} \left. \begin{cases} f_i & \text{for } i \in \descendinginterval{n - 2}{l - 1}, \\ \prod_{j \in \ascendinginterval{i + 1}{l - 2}} (\tilde x_j^{- 1} \face_{\descendinginterval{j}{i + 1}} \degeneracy_{\ascendinginterval{i}{j - 1}}) & \\ \qquad \cdot \prod_{j \in \descendinginterval{l - 2}{i}} (f_j \face_{\descendinginterval{l - 1}{i + 1}} \degeneracy_{\ascendinginterval{i}{l - 2}}) & \text{for } i \in \descendinginterval{l - 2}{0} \end{cases} \right\} & \text{if } k \geq l, \\ \left. \begin{cases} f_i & \text{for } i \in \descendinginterval{n - 2}{l - 2}, \\ \prod_{j \in \ascendinginterval{i + 1}{l - 3}} (\tilde x_j^{- 1} \face_{\descendinginterval{j}{i + 1}} \degeneracy_{\ascendinginterval{i}{j - 1}}) & \\ \qquad \cdot \prod_{j \in \descendinginterval{l - 3}{i}} (f_j \face_{\descendinginterval{l - 2}{i + 1}} \degeneracy_{\ascendinginterval{i}{l - 3}}) & \text{for } i \in \descendinginterval{l - 3}{0} \end{cases} \right\} & \text{if } k < l \end{cases}\]
for all \(i \in \descendinginterval{n - 2}{0}\). On the other hand, we have
\[((g_{n, i})_{i \in \descendinginterval{n - 1}{0}}, \cotime^{n + 1 - l}) H_n \face_k = (\tilde y_i)_{i \in \descendinginterval{n - 1}{0}} \face_k = (\tilde x_i')_{i \in \descendinginterval{n - 2}{0}}\]
with
\[\tilde y_i := \begin{cases} g_{n, i} & \text{for } i \in \descendinginterval{n - 1}{l - 1}, \\ \prod_{j \in \ascendinginterval{i + 1}{l - 2}} (\tilde y_j^{- 1} \face_{\descendinginterval{j}{i + 1}} \degeneracy_{\ascendinginterval{i}{j - 1}}) \prod_{j \in \descendinginterval{l - 2}{i}} (g_{n, j} \face_{\descendinginterval{l - 1}{i + 1}} \degeneracy_{\ascendinginterval{i}{l - 2}}) & \text{for } i \in \descendinginterval{l - 2}{0} \end{cases}\]
for \(i \in \descendinginterval{n - 1}{0}\) and
\[\tilde x_i' := \begin{cases} \tilde y_{i + 1} \face_k & \text{for } i \in \descendinginterval{n - 2}{k}, \\ (\tilde y_k \face_k) (\tilde y_{k - 1} \face_k) & \text{for } i = k - 1, \\ \tilde y_i \face_k & \text{for } i \in \descendinginterval{k - 2}{0} \end{cases}\]
for \(i \in \descendinginterval{n - 2}{0}\). We have to show that \(\tilde x_i = \tilde x_i'\) for all \(i \in \descendinginterval{n - 2}{0}\). To this end, we consider three cases and we handle each one by induction on \(i \in \descendinginterval{n - 2}{0}\).

We suppose that \(k \in \descendinginterval{n}{l}\). For \(i \in \descendinginterval{n - 2}{k}\), we have
\[\tilde x_i = f_i = g_{n, i + 1} \face_k = \tilde y_{i + 1} \face_k = \tilde x_i'.\]
For \(i = k - 1\), we get
\[\tilde x_{k - 1} = f_{k - 1} = (g_{n, k} \face_k) (g_{n, k - 1} \face_k) = (\tilde y_k \face_k) (\tilde y_{k - 1} \face_k) = \tilde x_{k - 1}'.\]
For \(i \in \descendinginterval{k - 2}{l - 1}\), we get
\[\tilde x_i = f_i = g_{n, i} \face_k = \tilde y_i \face_k = \tilde x_i'\]
Finally, for \(i \in \descendinginterval{l - 2}{0}\), we calculate
\begin{align*}
\tilde x_i & = \prod_{j \in \ascendinginterval{i + 1}{l - 2}} (\tilde x_j^{- 1} \face_{\descendinginterval{j}{i + 1}} \degeneracy_{\ascendinginterval{i}{j - 1}}) \prod_{j \in \descendinginterval{l - 2}{i}} (f_j \face_{\descendinginterval{l - 1}{i + 1}} \degeneracy_{\ascendinginterval{i}{l - 2}}) \\
& = \prod_{j \in \ascendinginterval{i + 1}{l - 2}} ({\tilde x_j'}{}^{- 1} \face_{\descendinginterval{j}{i + 1}} \degeneracy_{\ascendinginterval{i}{j - 1}}) \prod_{j \in \descendinginterval{l - 2}{i}} (f_j \face_{\descendinginterval{l - 1}{i + 1}} \degeneracy_{\ascendinginterval{i}{l - 2}}) \\
& = \prod_{j \in \ascendinginterval{i + 1}{l - 2}} (\tilde y_j^{- 1} \face_k \face_{\descendinginterval{j}{i + 1}} \degeneracy_{\ascendinginterval{i}{j - 1}}) \prod_{j \in \descendinginterval{l - 2}{i}} (g_{n, j} \face_k \face_{\descendinginterval{l - 1}{i + 1}} \degeneracy_{\ascendinginterval{i}{l - 2}}) \\
& = \Big( \prod_{j \in \ascendinginterval{i + 1}{l - 2}} (\tilde y_j^{- 1} \face_{\descendinginterval{j}{i + 1}} \degeneracy_{\ascendinginterval{i}{j - 1}}) \prod_{j \in \descendinginterval{l - 2}{i}} (g_{n, j} \face_{\descendinginterval{l - 1}{i + 1}} \degeneracy_{\ascendinginterval{i}{l - 2}}) \Big) \face_k = \tilde y_i \face_k = \tilde x_i'.
\end{align*}

Next, we suppose that \(k = l - 1\). For \(i \in \descendinginterval{n - 2}{k}\), we have
\[\tilde x_i = f_i = g_{n, i + 1} \face_k = \tilde y_{i + 1} \face_k = \tilde x_i'.\]
For \(i = k - 1\), we compute
\[\tilde x_{k - 1} = f_{k - 1} = (g_{n, k} \face_k) (g_{n, k - 1} \face_k) = (g_{n, k} \face_k) (g_{n, k - 1} \face_k \degeneracy_{k - 1} \face_k) = (\tilde y_k \face_k) (\tilde y_{k - 1} \face_k) = \tilde x_{k - 1}'.\]
For \(i \in \descendinginterval{k - 2}{0}\), we get
\begin{align*}
\tilde x_i & = \prod_{j \in \ascendinginterval{i + 1}{k - 2}} (\tilde x_j^{- 1} \face_{\descendinginterval{j}{i + 1}} \degeneracy_{\ascendinginterval{i}{j - 1}}) \prod_{j \in \descendinginterval{k - 2}{i}} (f_j \face_{\descendinginterval{k - 1}{i + 1}} \degeneracy_{\ascendinginterval{i}{k - 2}}) \\
& = \prod_{j \in \ascendinginterval{i + 1}{k - 2}} ({\tilde x_j'}{}^{- 1} \face_{\descendinginterval{j}{i + 1}} \degeneracy_{\ascendinginterval{i}{j - 1}}) \prod_{j \in \descendinginterval{k - 2}{i}} (f_j \face_{\descendinginterval{k - 1}{i + 1}} \degeneracy_{\ascendinginterval{i}{k - 2}}) \\
& = \prod_{j \in \ascendinginterval{i + 1}{k - 2}} (\tilde y_j^{- 1} \face_k \face_{\descendinginterval{j}{i + 1}} \degeneracy_{\ascendinginterval{i}{j - 1}}) \prod_{j \in \descendinginterval{k - 2}{i}} (g_{n, j} \face_k \face_{\descendinginterval{k - 1}{i + 1}} \degeneracy_{\ascendinginterval{i}{k - 2}}) \\
& = \Big( \prod_{j \in \ascendinginterval{i + 1}{k - 1}} (\tilde y_j^{- 1} \face_{\descendinginterval{j}{i + 1}} \degeneracy_{\ascendinginterval{i}{j - 1}}) \prod_{j \in \descendinginterval{k - 1}{i}} (g_{n, j} \face_{\descendinginterval{k}{i + 1}} \degeneracy_{\ascendinginterval{i}{k - 1}}) \Big) \face_k = \tilde y_i \face_k = \tilde x_i'.
\end{align*}

Finally, we suppose that \(k \in \descendinginterval{l - 2}{0}\). For \(i \in \descendinginterval{n - 2}{l - 2}\), we see that
\[\tilde x_i = f_i = g_{n, i + 1} \face_k = \tilde y_{i + 1} \face_k = \tilde x_i'.\]
For \(i \in \descendinginterval{l - 3}{k}\), we have
\begin{align*}
\tilde x_i & = \prod_{j \in \ascendinginterval{i + 1}{l - 3}} (\tilde x_j^{- 1} \face_{\descendinginterval{j}{i + 1}} \degeneracy_{\ascendinginterval{i}{j - 1}}) \prod_{j \in \descendinginterval{l - 3}{i}} (f_j \face_{\descendinginterval{l - 2}{i + 1}} \degeneracy_{\ascendinginterval{i}{l - 3}}) \\
& = \prod_{j \in \ascendinginterval{i + 1}{l - 3}} ({\tilde x_j'}{}^{- 1} \face_{\descendinginterval{j}{i + 1}} \degeneracy_{\ascendinginterval{i}{j - 1}}) \prod_{j \in \descendinginterval{l - 3}{i}} (f_j \face_{\descendinginterval{l - 2}{i + 1}} \degeneracy_{\ascendinginterval{i}{l - 3}}) \\
& = \prod_{j \in \ascendinginterval{i + 1}{l - 3}} (\tilde y_{j + 1}^{- 1} \face_k \face_{\descendinginterval{j}{i + 1}} \degeneracy_{\ascendinginterval{i}{j - 1}}) \prod_{j \in \descendinginterval{l - 3}{i}} (g_{n, j + 1} \face_k \face_{\descendinginterval{l - 2}{i + 1}} \degeneracy_{\ascendinginterval{i}{l - 3}}) \\
& = \Big( \prod_{j \in \ascendinginterval{i + 2}{l - 2}} (\tilde y_j^{- 1} \face_{\descendinginterval{j}{i + 2}} \degeneracy_{\ascendinginterval{i + 1}{j - 1}}) \prod_{j \in \descendinginterval{l - 2}{i + 1}} (g_{n, j} \face_{\descendinginterval{l - 1}{i + 2}} \degeneracy_{\ascendinginterval{i + 1}{l - 2}}) \Big) \face_k = \tilde y_{i + 1} \face_k = \tilde x_i'.
\end{align*}
For \(i = k - 1\), we have
\begin{align*}
\tilde x_{k - 1} & = \prod_{j \in \ascendinginterval{k}{l - 3}} (\tilde x_j^{- 1} \face_{\descendinginterval{j}{k}} \degeneracy_{\ascendinginterval{k - 1}{j - 1}}) \prod_{j \in \descendinginterval{l - 3}{k - 1}} (f_j \face_{\descendinginterval{l - 2}{k}} \degeneracy_{\ascendinginterval{k - 1}{l - 3}}) \\
& = \prod_{j \in \ascendinginterval{k}{l - 3}} ({\tilde x_j'}{}^{- 1} \face_{\descendinginterval{j}{k}} \degeneracy_{\ascendinginterval{k - 1}{j - 1}}) \prod_{j \in \descendinginterval{l - 3}{k - 1}} (f_j \face_{\descendinginterval{l - 2}{k}} \degeneracy_{\ascendinginterval{k - 1}{l - 3}}) \\
& = (\prod_{j \in \ascendinginterval{k}{l - 3}} (\tilde y_{j + 1} \face_k \face_{\descendinginterval{j}{k}} \degeneracy_{\ascendinginterval{k - 1}{j - 1}})) (\prod_{j \in \descendinginterval{l - 3}{k}} (g_{n, j + 1} \face_k \face_{\descendinginterval{l - 2}{k}} \degeneracy_{\ascendinginterval{k - 1}{l - 3}})) \\
& \qquad \cdot (g_{n, k} \face_k \face_{\descendinginterval{l - 2}{k}} \degeneracy_{\ascendinginterval{k - 1}{l - 3}}) (g_{n, k - 1} \face_k \face_{\descendinginterval{l - 2}{k}} \degeneracy_{\ascendinginterval{k - 1}{l - 3}}) \\
& = \prod_{j \in \ascendinginterval{k + 1}{l - 2}} (\tilde y_j^{- 1} \face_{\descendinginterval{j}{k}} \degeneracy_{\ascendinginterval{k - 1}{j - 2}}) \prod_{j \in \descendinginterval{l - 2}{k - 1}} (g_{n, j} \face_{\descendinginterval{l - 1}{k}} \degeneracy_{\ascendinginterval{k - 1}{l - 3}}) \\
& = (\tilde y_k \face_k) \Big( \prod_{j \in \ascendinginterval{k}{l - 2}} (\tilde y_j^{- 1} \face_{\descendinginterval{j}{k}} \degeneracy_{\ascendinginterval{k - 1}{j - 1}}) \prod_{j \in \descendinginterval{l - 2}{k - 1}} (g_{n, j} \face_{\descendinginterval{l - 1}{k}} \degeneracy_{\ascendinginterval{k - 1}{l - 2}}) \Big) \face_k = (\tilde y_k \face_k) (\tilde y_{k - 1} \face_k) \\
& = \tilde x_{k - 1}'.
\end{align*}
For \(i \in \descendinginterval{k - 2}{0}\), we get
\begin{align*}
\tilde x_i & = \prod_{j \in \ascendinginterval{i + 1}{l - 3}} (\tilde x_j^{- 1} \face_{\descendinginterval{j}{i + 1}} \degeneracy_{\ascendinginterval{i}{j - 1}}) \prod_{j \in \descendinginterval{l - 3}{i}} (f_j \face_{\descendinginterval{l - 2}{i + 1}} \degeneracy_{\ascendinginterval{i}{l - 3}}) \\
& = \prod_{j \in \ascendinginterval{i + 1}{l - 3}} ({\tilde x_j'}{}^{- 1} \face_{\descendinginterval{j}{i + 1}} \degeneracy_{\ascendinginterval{i}{j - 1}}) \prod_{j \in \descendinginterval{l - 3}{i}} (f_j \face_{\descendinginterval{l - 2}{i + 1}} \degeneracy_{\ascendinginterval{i}{l - 3}}) \\
& = (\prod_{j \in \ascendinginterval{i + 1}{k - 2}} (\tilde y_j^{- 1} \face_k \face_{\descendinginterval{j}{i + 1}} \degeneracy_{\ascendinginterval{i}{j - 1}})) (\tilde y_{k - 1}^{- 1} \face_k \face_{\descendinginterval{k - 1}{i + 1}} \degeneracy_{\ascendinginterval{i}{k - 2}}) (\tilde y_k^{- 1} \face_k \face_{\descendinginterval{k - 1}{i + 1}} \degeneracy_{\ascendinginterval{i}{k - 2}}) \\
& \qquad \cdot (\prod_{j \in \ascendinginterval{k}{l - 3}} (\tilde y_{j + 1}^{- 1} \face_k \face_{\descendinginterval{j}{i + 1}} \degeneracy_{\ascendinginterval{i}{j - 1}})) (\prod_{j \in \descendinginterval{l - 3}{k}} (g_{n, j + 1} \face_k \face_{\descendinginterval{l - 2}{i + 1}} \degeneracy_{\ascendinginterval{i}{l - 3}})) (g_{n, k} \face_k \face_{\descendinginterval{l - 2}{i + 1}} \degeneracy_{\ascendinginterval{i}{l - 3}}) \\
& \qquad \cdot (g_{n, k - 1} \face_k \face_{\descendinginterval{l - 2}{i + 1}} \degeneracy_{\ascendinginterval{i}{l - 3}}) (\prod_{j \in \descendinginterval{k - 2}{i}} (g_{n, j} \face_k \face_{\descendinginterval{l - 2}{i + 1}} \degeneracy_{\ascendinginterval{i}{l - 3}})) \\
& = \prod_{j \in \ascendinginterval{i + 1}{k - 1}} (\tilde y_j^{- 1} \face_k \face_{\descendinginterval{j}{i + 1}} \degeneracy_{\ascendinginterval{i}{j - 1}}) \prod_{j \in \ascendinginterval{k}{l - 2}} (\tilde y_j^{- 1} \face_k \face_{\descendinginterval{j - 1}{i + 1}} \degeneracy_{\ascendinginterval{i}{j - 2}}) \prod_{j \in \descendinginterval{l - 2}{i}} (g_{n, j} \face_k \face_{\descendinginterval{l - 2}{i + 1}} \degeneracy_{\ascendinginterval{i}{l - 3}}) \\
& = \Big( \prod_{j \in \ascendinginterval{i + 1}{l - 2}} (\tilde y_j^{- 1} \face_{\descendinginterval{j}{i + 1}} \degeneracy_{\ascendinginterval{i}{j - 1}}) \prod_{j \in \descendinginterval{l - 2}{i}} (g_{n, j} \face_{\descendinginterval{l - 1}{i + 1}} \degeneracy_{\ascendinginterval{i}{l - 2}}) \Big) \face_k = \tilde y_i \face_k = \tilde x_i'.
\end{align*}

Now we consider the degeneracies. We let \(n \in \Naturals_0\), \(k \in [0, n]\), \(l \in [0, n + 1]\), and \((g_{n, i})_{i \in \descendinginterval{n - 1}{0}} \in \DiagonalFunctor_n \Nerve G\). We compute
\begin{align*}
((g_{n, i})_{i \in \descendinginterval{n - 1}{0}}, \cotime^{n + 1 - l}) \degeneracy_k H_{n + 1} & = ((g_{n, i})_{i \in \descendinginterval{n - 1}{0}} \degeneracy_k, \cotime^{n + 1 - l} \degeneracy_k) H_{n + 1} = ((h_i)_{i \in \descendinginterval{n - 1}{0}}, \codegeneracy^k \cotime^{n + 1 - l}) H_{n + 1} \\
& = \left. \begin{cases} ((h_i)_{i \in \descendinginterval{n - 1}{0}}, \cotime^{n + 2 - l}) H_{n + 1} & \text{for } k \geq l, \\ ((h_i)_{i \in \descendinginterval{n - 1}{0}}, \cotime^{n + 1 - l}) H_{n + 1} & \text{for } k < l \end{cases} \right\} = (\tilde z_i)_{i \in \descendinginterval{n}{0}},
\end{align*}
where
\[h_i := \begin{cases} g_{n, i - 1} \degeneracy_k & \text{for } i \in \descendinginterval{n}{k + 1}, \\ 1 & \text{for } i = k, \\ g_{n, i} \degeneracy_k & \text{for } i \in \descendinginterval{k - 1}{0} \end{cases}\]
and
\[\tilde z_i := \begin{cases} \left. \begin{cases} h_i & \text{for } i \in \descendinginterval{n}{l - 1}, \\ \prod_{j \in \ascendinginterval{i + 1}{l - 2}} (\tilde z_j^{- 1} \face_{\descendinginterval{j}{i + 1}} \degeneracy_{\ascendinginterval{i}{j - 1}}) \prod_{j \in \descendinginterval{l - 2}{i}} (h_j \face_{\descendinginterval{l - 1}{i + 1}} \degeneracy_{\ascendinginterval{i}{l - 2}}) & \text{for } i \in \descendinginterval{l - 2}{0} \end{cases} \right\} & \text{if } k \geq l, \\ \left. \begin{cases} h_i & \text{for } i \in \descendinginterval{n}{l}, \\ \prod_{j \in \ascendinginterval{i + 1}{l - 1}} (\tilde z_j^{- 1} \face_{\descendinginterval{j}{i + 1}} \degeneracy_{\ascendinginterval{i}{j - 1}}) \prod_{j \in \descendinginterval{l - 1}{i}} (h_j \face_{\descendinginterval{l}{i + 1}} \degeneracy_{\ascendinginterval{i}{l - 1}}) & \text{for } i \in \descendinginterval{l - 1}{0} \end{cases} \right\} & \text{if } k < l. \end{cases}\]
Furthermore, we have
\[((g_{n, i})_{i \in \descendinginterval{n - 1}{0}}, \cotime^{n + 1 - l}) H_n \degeneracy_k = (\tilde y_i)_{i \in \descendinginterval{n - 1}{0}} \degeneracy_k = (\tilde z_i')_{i \in \descendinginterval{n}{0}},\]
where
\[\tilde y_i := \begin{cases} g_{n, i} & \text{for } i \in \descendinginterval{n - 1}{l - 1}, \\ \prod_{j \in \ascendinginterval{i + 1}{l - 2}} (\tilde y_j^{- 1} \face_{\descendinginterval{j}{i + 1}} \degeneracy_{\ascendinginterval{i}{j - 1}}) \prod_{j \in \descendinginterval{l - 2}{i}} (g_{n, j} \face_{\descendinginterval{l - 1}{i + 1}} \degeneracy_{\ascendinginterval{i}{l - 2}}) & \text{for } i \in \descendinginterval{l - 2}{0} \end{cases}\]
and
\[\tilde z_i' := \begin{cases} \tilde y_{i - 1} \degeneracy_k & \text{for } i \in \descendinginterval{n}{k + 1}, \\ 1 & \text{for } i = k, \\ \tilde y_i \degeneracy_k & \text{for } i \in \descendinginterval{k - 1}{0}. \end{cases}\]
Thus we have to show that \(\tilde z_i = \tilde z_i'\) for every \(i \in \descendinginterval{n}{0}\). Again, we distinguish three cases, and in each one, we perform an induction on \(i \in \descendinginterval{n}{0}\).

We suppose that \(k \in \descendinginterval{n}{l}\). For \(i \in \descendinginterval{n}{k + 1}\), we calculate
\[\tilde z_i = h_i = g_{n, i - 1} \degeneracy_k = \tilde y_{i - 1} \degeneracy_k = \tilde z_i'.\]
For \(i = k\), we get
\[\tilde z_k = h_k = 1 = \tilde z_k'.\]
For \(i \in \descendinginterval{k - 1}{l - 1}\), we have
\[\tilde z_i = h_i = g_{n, i} \degeneracy_k = \tilde y_i \degeneracy_k = \tilde z_i'.\]
For \(i \in \descendinginterval{l - 2}{0}\), we get
\begin{align*}
\tilde z_i & = \prod_{j \in \ascendinginterval{i + 1}{l - 2}} (\tilde z_j^{- 1} \face_{\descendinginterval{j}{i + 1}} \degeneracy_{\ascendinginterval{i}{j - 1}}) \prod_{j \in \descendinginterval{l - 2}{i}} (h_j \face_{\descendinginterval{l - 1}{i + 1}} \degeneracy_{\ascendinginterval{i}{l - 2}}) \\
& = \prod_{j \in \ascendinginterval{i + 1}{l - 2}} ({\tilde z_j'}{}^{- 1} \face_{\descendinginterval{j}{i + 1}} \degeneracy_{\ascendinginterval{i}{j - 1}}) \prod_{j \in \descendinginterval{l - 2}{i}} (h_j \face_{\descendinginterval{l - 1}{i + 1}} \degeneracy_{\ascendinginterval{i}{l - 2}}) \\
& = \prod_{j \in \ascendinginterval{i + 1}{l - 2}} (\tilde y_j^{- 1} \degeneracy_k \face_{\descendinginterval{j}{i + 1}} \degeneracy_{\ascendinginterval{i}{j - 1}}) \prod_{j \in \descendinginterval{l - 2}{i}} (g_{n, j} \degeneracy_k \face_{\descendinginterval{l - 1}{i + 1}} \degeneracy_{\ascendinginterval{i}{l - 2}}) \\
& = \Big( \prod_{j \in \ascendinginterval{i + 1}{l - 2}} (\tilde y_j^{- 1} \face_{\descendinginterval{j}{i + 1}} \degeneracy_{\ascendinginterval{i}{j - 1}}) \prod_{j \in \descendinginterval{l - 2}{i}} (g_{n, j} \face_{\descendinginterval{l - 1}{i + 1}} \degeneracy_{\ascendinginterval{i}{l - 2}}) \Big) \degeneracy_k = \tilde y_i \degeneracy_k.
\end{align*}

Now we suppose that \(k = l - 1\). For \(i \in \descendinginterval{n}{k + 1}\), we calculate
\[\tilde z_i = h_i = g_{n, i - 1} \degeneracy_k = \tilde y_{i - 1} \degeneracy_k = \tilde z_i'.\]
For \(i = k\), we get
\[\tilde z_k = h_k \face_{k + 1} \degeneracy_k = 1 = \tilde z_k'.\]
For \(i \in \descendinginterval{k - 1}{0}\), we get
\begin{align*}
\tilde z_i & = \prod_{j \in \ascendinginterval{i + 1}{k}} (\tilde z_j^{- 1} \face_{\descendinginterval{j}{i + 1}} \degeneracy_{\ascendinginterval{i}{j - 1}}) \prod_{j \in \descendinginterval{k}{i}} (h_j \face_{\descendinginterval{k + 1}{i + 1}} \degeneracy_{\ascendinginterval{i}{k}}) \\
& = \prod_{j \in \ascendinginterval{i + 1}{k}} ({\tilde z_j'}{}^{- 1} \face_{\descendinginterval{j}{i + 1}} \degeneracy_{\ascendinginterval{i}{j - 1}}) \prod_{j \in \descendinginterval{k}{i}} (h_j \face_{\descendinginterval{k + 1}{i + 1}} \degeneracy_{\ascendinginterval{i}{k}}) \\
& = \prod_{j \in \ascendinginterval{i + 1}{k - 1}} (\tilde y_j^{- 1} \degeneracy_k \face_{\descendinginterval{j}{i + 1}} \degeneracy_{\ascendinginterval{i}{j - 1}}) \prod_{j \in \descendinginterval{k - 1}{i}} (g_{n, j} \degeneracy_k \face_{\descendinginterval{k + 1}{i + 1}} \degeneracy_{\ascendinginterval{i}{k}}) \\
& = \Big( \prod_{j \in \ascendinginterval{i + 1}{k - 1}} (\tilde y_j^{- 1} \face_{\descendinginterval{j}{i + 1}} \degeneracy_{\ascendinginterval{i}{j - 1}}) \prod_{j \in \descendinginterval{k - 1}{i}} (g_{n, j} \face_{\descendinginterval{k}{i + 1}} \degeneracy_{\ascendinginterval{i}{k - 1}}) \Big) \degeneracy_k = \tilde y_i \degeneracy_k.
\end{align*}

At last, we suppose that \(k \in \descendinginterval{l - 2}{0}\). For \(i \in \descendinginterval{n}{l}\), we have
\[\tilde z_i = h_i = g_{n, i - 1} \degeneracy_k = \tilde y_{i - 1} \degeneracy_k = \tilde z_i'.\]
For \(i \in \descendinginterval{l - 1}{k + 1}\), we get
\begin{align*}
\tilde z_i & = \prod_{j \in \ascendinginterval{i + 1}{l - 1}} (\tilde z_j^{- 1} \face_{\descendinginterval{j}{i + 1}} \degeneracy_{\ascendinginterval{i}{j - 1}}) \prod_{j \in \descendinginterval{l - 1}{i}} (h_j \face_{\descendinginterval{l}{i + 1}} \degeneracy_{\ascendinginterval{i}{l - 1}}) \\
& = \prod_{j \in \ascendinginterval{i + 1}{l - 1}} ({\tilde z_j'}{}^{- 1} \face_{\descendinginterval{j}{i + 1}} \degeneracy_{\ascendinginterval{i}{j - 1}}) \prod_{j \in \descendinginterval{l - 1}{i}} (h_j \face_{\descendinginterval{l}{i + 1}} \degeneracy_{\ascendinginterval{i}{l - 1}}) \\
& = \prod_{j \in \ascendinginterval{i + 1}{l - 1}} (\tilde y_{j - 1}^{- 1} \degeneracy_k \face_{\descendinginterval{j}{i + 1}} \degeneracy_{\ascendinginterval{i}{j - 1}}) \prod_{j \in \descendinginterval{l - 1}{i}} (g_{n, j - 1} \degeneracy_k \face_{\descendinginterval{l}{i + 1}} \degeneracy_{\ascendinginterval{i}{l - 1}}) \\
& = (\prod_{j \in \ascendinginterval{i}{l - 2}} (\tilde y_j^{- 1} \face_{\descendinginterval{j}{i}} \degeneracy_{\ascendinginterval{i - 1}{j - 1}}) \prod_{j \in \descendinginterval{l - 2}{i - 1}} (g_{n, j} \face_{\descendinginterval{l - 1}{i}} \degeneracy_{\ascendinginterval{i - 1}{l - 2}})) \degeneracy_k = \tilde y_{i - 1} \degeneracy_k = \tilde z_i'.
\end{align*}
For \(i = k\), we have
\begin{align*}
\tilde z_k & = \prod_{j \in \ascendinginterval{k + 1}{l - 1}} (\tilde z_j^{- 1} \face_{\descendinginterval{j}{k + 1}} \degeneracy_{\ascendinginterval{k}{j - 1}}) \prod_{j \in \descendinginterval{l - 1}{k}} (h_j \face_{\descendinginterval{l}{k + 1}} \degeneracy_{\ascendinginterval{k}{l - 1}}) \\
& = \prod_{j \in \ascendinginterval{k + 1}{l - 1}} ({\tilde z_j'}{}^{- 1} \face_{\descendinginterval{j}{k + 1}} \degeneracy_{\ascendinginterval{k}{j - 1}}) \prod_{j \in \descendinginterval{l - 1}{k}} (h_j \face_{\descendinginterval{l}{k + 1}} \degeneracy_{\ascendinginterval{k}{l - 1}}) \\
& = \prod_{j \in \ascendinginterval{k + 1}{l - 1}} (\tilde y_{j - 1}^{- 1} \degeneracy_k \face_{\descendinginterval{j}{k + 1}} \degeneracy_{\ascendinginterval{k}{j - 1}}) \prod_{j \in \descendinginterval{l - 1}{k + 1}} (g_{n, j - 1} \degeneracy_k \face_{\descendinginterval{l}{k + 1}} \degeneracy_{\ascendinginterval{k}{l - 1}}) \\
& = (\tilde y_k^{- 1} \degeneracy_k) (\prod_{j \in \ascendinginterval{k + 1}{l - 2}} (\tilde y_j^{- 1} \face_{\descendinginterval{j}{k + 1}} \degeneracy_{\ascendinginterval{k}{j}})) (\prod_{j \in \descendinginterval{l - 2}{k}} (g_{n, j} \face_{\descendinginterval{l - 1}{k + 1}} \degeneracy_{\ascendinginterval{k}{l - 1}})) \\
& = (\tilde y_k^{- 1} \degeneracy_k) \Big( \Big( \prod_{j \in \ascendinginterval{k + 1}{l - 2}} (\tilde y_j^{- 1} \face_{\descendinginterval{j}{k + 1}} \degeneracy_{\ascendinginterval{k}{j - 1}}) \prod_{j \in \descendinginterval{l - 2}{k}} (g_{n, j} \face_{\descendinginterval{l - 1}{k + 1}} \degeneracy_{\ascendinginterval{k}{l - 2}}) \Big) \degeneracy_k \Big) = (\tilde y_k^{- 1} \degeneracy_k) (\tilde y_k \degeneracy_k) = 1 \\
& = \tilde z_k'.
\end{align*}
For \(i \in \descendinginterval{k - 1}{0}\), we get
\begin{align*}
\tilde z_i & = \prod_{j \in \ascendinginterval{i + 1}{l - 1}} (\tilde z_j^{- 1} \face_{\descendinginterval{j}{i + 1}} \degeneracy_{\ascendinginterval{i}{j - 1}}) \prod_{j \in \descendinginterval{l - 1}{i}} (h_j \face_{\descendinginterval{l}{i + 1}} \degeneracy_{\ascendinginterval{i}{l - 1}}) \\
& = \prod_{j \in \ascendinginterval{i + 1}{l - 1}} ({\tilde z_j'}{}^{- 1} \face_{\descendinginterval{j}{i + 1}} \degeneracy_{\ascendinginterval{i}{j - 1}}) \prod_{j \in \descendinginterval{l - 1}{i}} (h_j \face_{\descendinginterval{l}{i + 1}} \degeneracy_{\ascendinginterval{i}{l - 1}}) \\
& = \prod_{j \in \ascendinginterval{i + 1}{k - 1}} (\tilde y_j^{- 1} \degeneracy_k \face_{\descendinginterval{j}{i + 1}} \degeneracy_{\ascendinginterval{i}{j - 1}}) \prod_{j \in \ascendinginterval{k + 1}{l - 1}} (\tilde y_{j - 1}^{- 1} \degeneracy_k \face_{\descendinginterval{j}{i + 1}} \degeneracy_{\ascendinginterval{i}{j - 1}}) \\
& \qquad \cdot \prod_{j \in \descendinginterval{l - 1}{k + 1}} (g_{n, j - 1} \degeneracy_k \face_{\descendinginterval{l}{i + 1}} \degeneracy_{\ascendinginterval{i}{l - 1}}) \prod_{j \in \descendinginterval{k - 1}{i}} (g_{n, j} \degeneracy_k \face_{\descendinginterval{l}{i + 1}} \degeneracy_{\ascendinginterval{i}{l - 1}}) \\
& = \prod_{j \in \ascendinginterval{i + 1}{k - 1}} (\tilde y_j^{- 1} \degeneracy_k \face_{\descendinginterval{j}{i + 1}} \degeneracy_{\ascendinginterval{i}{j - 1}}) \prod_{j \in \ascendinginterval{k}{l - 2}} (\tilde y_j^{- 1} \degeneracy_k \face_{\descendinginterval{j + 1}{i + 1}} \degeneracy_{\ascendinginterval{i}{j}}) \prod_{j \in \descendinginterval{l - 2}{i}} (g_{n, j} \degeneracy_k \face_{\descendinginterval{l}{i + 1}} \degeneracy_{\ascendinginterval{i}{l - 1}}) \\
& = \Big( \prod_{j \in \ascendinginterval{i + 1}{l - 2}} (\tilde y_j^{- 1} \face_{\descendinginterval{j}{i + 1}} \degeneracy_{\ascendinginterval{i}{j - 1}}) \prod_{j \in \descendinginterval{l - 2}{i}} (g_{n, j} \face_{\descendinginterval{l - 1}{i + 1}} \degeneracy_{\ascendinginterval{i}{l - 2}}) \Big) \degeneracy_k = \tilde y_i \degeneracy_k = \tilde z_i'.
\end{align*}

Altogether, we obtain a simplicial map
\[\DiagonalFunctor \Nerve G \times \StandardSimplex{1} \morphism[H] \DiagonalFunctor \Nerve G.\]

To prove that \(H\) is a simplicial homotopy from \(D_G S_G\) to \(\id_{\DiagonalFunctor \Nerve G}\), it remains to show that \(\ins_0 H = D_G S_G\) and \(\ins_1 H = \id_{\DiagonalFunctor \Nerve G}\). For \(n \in \Naturals_0\), \(k \in [0, n + 1]\), \((g_{n, i})_{i \in \descendinginterval{n - 1}{0}} \in \DiagonalFunctor_n \Nerve G\), \(n \in \Naturals_0\), we have
\[(g_{n, i})_{i \in \descendinginterval{n - 1}{0}} (D_G)_n (S_G)_n = (g_{n, i} \face_{\descendinginterval{n}{i + 1}})_{i \in \descendinginterval{n - 1}{0}} (S_G)_n = (y_i)_{i \in \descendinginterval{n - 1}{0}}\]
with
\begin{align*}
y_i & = \prod_{j \in \ascendinginterval{i + 1}{n - 1}} (y_j^{- 1} \face_{\descendinginterval{j}{i + 1}} \degeneracy_{\ascendinginterval{i}{j - 1}}) \prod_{j \in \descendinginterval{n - 1}{i}} (g_{n, j} \face_{\descendinginterval{n}{j + 1}} \face_{\descendinginterval{j}{i + 1}} \degeneracy_{\ascendinginterval{i}{n - 1}}) \\
& = \prod_{j \in \ascendinginterval{i + 1}{n - 1}} (y_j^{- 1} \face_{\descendinginterval{j}{i + 1}} \degeneracy_{\ascendinginterval{i}{j - 1}}) \prod_{j \in \descendinginterval{n - 1}{i}} (g_{n, j} \face_{\descendinginterval{n}{i + 1}} \degeneracy_{\ascendinginterval{i}{n - 1}})
\end{align*}
for \(i \in \descendinginterval{n - 1}{0}\), and
\[((g_{n, i})_{i \in \descendinginterval{n - 1}{0}}, \cotime^{n + 1 - k}) H_n = (y_i^{(n + 1 - k)})_{i \in \descendinginterval{n - 1}{0}}\]
with
\[y_i^{(n + 1 - k)} := \begin{cases} g_{n, i} & \text{for } i \in \descendinginterval{n - 1}{k - 1} \cap \Naturals_0, \\ \prod_{j \in \ascendinginterval{i + 1}{k - 2}} ((y_j^{(n + 1 - k)})^{- 1} \face_{\descendinginterval{j}{i + 1}} \degeneracy_{\ascendinginterval{i}{j - 1}}) \\ \qquad \cdot \prod_{j \in \descendinginterval{k - 2}{i}} (g_{n, j} \face_{\descendinginterval{k - 1}{i + 1}} \degeneracy_{\ascendinginterval{i}{k - 2}}) & \text{for } i \in \descendinginterval{k - 2}{0}. \end{cases}\]
But by descending induction on \(i \in \descendinginterval{n - 1}{0}\), we get
\begin{align*}
y_i & = \prod_{j \in \ascendinginterval{i + 1}{n - 1}} (y_j^{- 1} \face_{\descendinginterval{j}{i + 1}} \degeneracy_{\ascendinginterval{i}{j - 1}}) \prod_{j \in \descendinginterval{n - 1}{i}} (g_{n, j} \face_{\descendinginterval{n}{i + 1}} \degeneracy_{\ascendinginterval{i}{n - 1}}) \\
& = \prod_{j \in \ascendinginterval{i + 1}{n - 1}} ((y_j^{(0)})^{- 1} \face_{\descendinginterval{j}{i + 1}} \degeneracy_{\ascendinginterval{i}{j - 1}}) \prod_{j \in \descendinginterval{n - 1}{i}} (g_{n, j} \face_{\descendinginterval{n}{i + 1}} \degeneracy_{\ascendinginterval{i}{n - 1}}) = y_i^{(0)}.
\end{align*}
Hence the simplicial map \(H\) fulfills
\begin{align*}
(g_{n, i})_{i \in \descendinginterval{n - 1}{0}} (\ins_0)_n H_n & = ((g_{n, i})_{i \in \descendinginterval{n - 1}{0}}, \cotime^0) H_n = (y_i^{(0)})_{i \in \descendinginterval{n - 1}{0}} \\
& = (y_i)_{i \in \descendinginterval{n - 1}{0}} = (g_{n, i})_{i \in \descendinginterval{n - 1}{0}} (D_G)_n (S_G)_n
\end{align*}
and
\[(g_{n, i})_{i \in \descendinginterval{n - 1}{0}} (\ins_1)_n H_n = ((g_{n, i})_{i \in \descendinginterval{n - 1}{0}}, \cotime^{n + 1}) H_n = (g_{n, i})_{i \in \descendinginterval{n - 1}{0}}\]
for each \((g_{n, i})_{i \in \descendinginterval{n - 1}{0}} \in \DiagonalFunctor_n \Nerve G\), \(n \in \Naturals_0\).

In order to prove that \(\KanClassifyingSimplicialSet G\) is a strong deformation retract of \(\DiagonalFunctor \Nerve G\), it remains to show that \(H\) is constant along \(S_G\). Concretely, this means the following. For \((g_i)_{i \in \descendinginterval{n - 1}{0}} \in \KanClassifyingSimplicialSet_n G\), we have
\[((g_i)_{i \in \descendinginterval{n - 1}{0}} (S_G)_n, \cotime^{n + 1 - k}) H_n = ((y_i)_{i \in \descendinginterval{n - 1}{0}}, \cotime^{n + 1 - k}) H_n = (y_i^{(n + 1 - k)})_{i \in \descendinginterval{n - 1}{0}},\]
where
\[y_i := \prod_{j \in \ascendinginterval{i + 1}{n - 1}} (y_j^{- 1} \face_{\descendinginterval{j}{i + 1}} \degeneracy_{\ascendinginterval{i}{j - 1}}) \prod_{j \in \descendinginterval{n - 1}{i}} (g_j \face_{\descendinginterval{j}{i + 1}} \degeneracy_{\ascendinginterval{i}{n - 1}})\]
and
\[y_i^{(n + 1 - k)} := \begin{cases} y_i & \text{for } i \in \descendinginterval{n - 1}{k - 1} \cap \Naturals_0, \\ \prod_{j \in \ascendinginterval{i + 1}{k - 2}} ((y_j^{(n + 1 - k)})^{- 1} \face_{\descendinginterval{j}{i + 1}} \degeneracy_{\ascendinginterval{i}{j - 1}}) & \\ \qquad \cdot \prod_{j \in \descendinginterval{k - 2}{i}} (y_j \face_{\descendinginterval{k - 1}{i + 1}} \degeneracy_{\ascendinginterval{i}{k - 2}}) & \text{for } i \in \descendinginterval{k - 2}{0}. \end{cases}\]
Now, we have to show that \(y_i^{(n + 1 - k)} = y_i\) for all \(i \in \descendinginterval{n - 1}{0}\), \(k \in [0, n + 1]\). For \(k \in \{n + 1, 0\}\), this follows since \(H\) is a simplicial homotopy from \(D_G S_G\) to \(\id_{\DiagonalFunctor \Nerve G}\) and since \(S_G D_G S_G = S_G\). So we may assume that \(k \in \descendinginterval{n}{1}\) and have to show that \(y_i^{(n + 1 - k)} = y_i\) for every \(i \in \descendinginterval{k - 2}{0}\). But we have
\begin{align*}
& y_i \face_{\descendinginterval{k - 1}{i + 1}} \degeneracy_{\ascendinginterval{i}{k - 2}} = \Big( \prod_{j \in \ascendinginterval{i + 1}{n - 1}} (y_j^{- 1} \face_{\descendinginterval{j}{i + 1}} \degeneracy_{\ascendinginterval{i}{j - 1}}) \prod_{j \in \descendinginterval{n - 1}{i}} (g_j \face_{\descendinginterval{j}{i + 1}} \degeneracy_{\ascendinginterval{i}{n - 1}}) \Big) \face_{\descendinginterval{k - 1}{i + 1}} \degeneracy_{\ascendinginterval{i}{k - 2}} \\
& = \prod_{j \in \ascendinginterval{i + 1}{n - 1}} (y_j^{- 1} \face_{\descendinginterval{j}{i + 1}} \degeneracy_{\ascendinginterval{i}{j - 1}} \face_{\descendinginterval{k - 1}{i + 1}} \degeneracy_{\ascendinginterval{i}{k - 2}}) \prod_{j \in \descendinginterval{n - 1}{i}} (g_j \face_{\descendinginterval{j}{i + 1}} \degeneracy_{\ascendinginterval{i}{n - 1}} \face_{\descendinginterval{k - 1}{i + 1}} \degeneracy_{\ascendinginterval{i}{k - 2}}) \\
& = \prod_{j \in \ascendinginterval{i + 1}{k - 1}} (y_j^{- 1} \face_{\descendinginterval{j}{i + 1}} \degeneracy_{\ascendinginterval{i}{j - 1}} \face_{\descendinginterval{k - 1}{j + 1}} \face_{\descendinginterval{j}{i + 1}} \degeneracy_{\ascendinginterval{i}{k - 2}}) \\
& \qquad \cdot \prod_{j \in \ascendinginterval{k}{n - 1}} (y_j^{- 1} \face_{\descendinginterval{j}{i + 1}} \degeneracy_{\ascendinginterval{i}{k - 1}} \degeneracy_{\ascendinginterval{k}{j - 1}} \face_{\descendinginterval{k - 1}{i + 1}} \degeneracy_{\ascendinginterval{i}{k - 2}}) \\
& \qquad \cdot \prod_{j \in \descendinginterval{n - 1}{i}} (g_j \face_{\descendinginterval{j}{i + 1}} \degeneracy_{\ascendinginterval{i}{k - 1}} \degeneracy_{\ascendinginterval{k}{n - 1}} \face_{\descendinginterval{k - 1}{i + 1}} \degeneracy_{\ascendinginterval{i}{k - 2}}) \\
& = \prod_{j \in \ascendinginterval{i + 1}{k - 1}} (y_j^{- 1} \face_{\descendinginterval{j}{i + 1}} \face_{\descendinginterval{i + k - 1 - j}{i + 1}} \degeneracy_{\ascendinginterval{i}{j - 1}} \face_{\descendinginterval{j}{i + 1}} \degeneracy_{\ascendinginterval{i}{k - 2}}) \\
& \qquad \cdot \prod_{j \in \ascendinginterval{k}{n - 1}} (y_j^{- 1} \face_{\descendinginterval{j}{i + 1}} \degeneracy_{\ascendinginterval{i}{k - 1}} \face_{\descendinginterval{k - 1}{i + 1}} \degeneracy_{\ascendinginterval{i + 1}{i + j - k}} \degeneracy_{\ascendinginterval{i}{k - 2}}) \\
& \qquad \cdot \prod_{j \in \descendinginterval{n - 1}{i}} (g_j \face_{\descendinginterval{j}{i + 1}} \degeneracy_{\ascendinginterval{i}{k - 1}} \face_{\descendinginterval{k - 1}{i + 1}} \degeneracy_{\ascendinginterval{i + 1}{i + n - k}} \degeneracy_{\ascendinginterval{i}{k - 2}}) \\
& = \prod_{j \in \ascendinginterval{i + 1}{k - 1}} (y_j^{- 1} \face_{\descendinginterval{k - 1}{i + 1}} \degeneracy_{\ascendinginterval{i}{k - 2}}) \prod_{j \in \ascendinginterval{k}{n - 1}} (y_j^{- 1} \face_{\descendinginterval{j}{i + 1}} \degeneracy_{\ascendinginterval{i}{j - 1}}) \prod_{j \in \descendinginterval{n - 1}{i}} (g_j \face_{\descendinginterval{j}{i + 1}} \degeneracy_{\ascendinginterval{i}{n - 1}}),
\end{align*}
and this implies, by induction on \(i \in \descendinginterval{k - 2}{0}\), that
\begin{align*}
y_i^{(n + 1 - k)} & = \prod_{j \in \ascendinginterval{i + 1}{k - 2}} ((y_j^{(n + 1 - k)})^{- 1} \face_{\descendinginterval{j}{i + 1}} \degeneracy_{\ascendinginterval{i}{j - 1}}) \prod_{j \in \descendinginterval{k - 2}{i}} (y_j \face_{\descendinginterval{k - 1}{i + 1}} \degeneracy_{\ascendinginterval{i}{k - 2}}) \\
& = \prod_{j \in \ascendinginterval{i + 1}{k - 2}} (y_j^{- 1} \face_{\descendinginterval{j}{i + 1}} \degeneracy_{\ascendinginterval{i}{j - 1}}) \prod_{j \in \descendinginterval{k - 2}{i}} (y_j \face_{\descendinginterval{k - 1}{i + 1}} \degeneracy_{\ascendinginterval{i}{k - 2}}) \\
& = \prod_{j \in \ascendinginterval{i + 1}{k - 2}} (y_j^{- 1} \face_{\descendinginterval{j}{i + 1}} \degeneracy_{\ascendinginterval{i}{j - 1}}) \prod_{j \in \descendinginterval{k - 2}{i + 1}} (y_j \face_{\descendinginterval{k - 1}{i + 1}} \degeneracy_{\ascendinginterval{i}{k - 2}}) \\
& \qquad \cdot \prod_{j \in \ascendinginterval{i + 1}{k - 1}} (y_j^{- 1} \face_{\descendinginterval{k - 1}{i + 1}} \degeneracy_{\ascendinginterval{i}{k - 2}}) \prod_{j \in \ascendinginterval{k}{n - 1}} (y_j^{- 1} \face_{\descendinginterval{j}{i + 1}} \degeneracy_{\ascendinginterval{i}{j - 1}}) \prod_{j \in \descendinginterval{n - 1}{i}} (g_j \face_{\descendinginterval{j}{i + 1}} \degeneracy_{\ascendinginterval{i}{n - 1}}) \\
& = \prod_{j \in \ascendinginterval{i + 1}{n - 1}} (y_j^{- 1} \face_{\descendinginterval{j}{i + 1}} \degeneracy_{\ascendinginterval{i}{j - 1}}) \prod_{j \in \descendinginterval{n - 1}{i}} (g_j \face_{\descendinginterval{j}{i + 1}} \degeneracy_{\ascendinginterval{i}{n - 1}}) = y_i
\end{align*}
for all \(i \in \descendinginterval{k - 2}{0}\).
\end{proof}

\bigskip

{\raggedleft Sebastian Thomas \\ Lehrstuhl D für Mathematik \\ RWTH Aachen \\ Templergraben 64 \\ D-52062 Aachen \\ sebastian.thomas@math.rwth-aachen.de \\ \url{http://www.math.rwth-aachen.de/~Sebastian.Thomas/} \\}

\end{document}